\let\iftxfonts=\iftrue  
\let\iflabels=\iffalse  
\let\iflitnum=\iftrue   
\let\ifsrcltx=\iffalse  
\def\be#1\ee{\begin{equation}#1\end{equation}}
\def\bad#1\ead{\be\begin{aligned}#1\end{aligned}\ee}
\def\badl#1#2\eadl{\be\label{#1}\begin{aligned}#2\end{aligned}\ee}
\renewcommand\={\,=\,} \numberwithin{equation}{section}
\newtheorem{thm}{Theorem}[section]
\newtheorem{prop}[thm]{Proposition}
\newtheorem{lem}[thm]{Lemma} \theoremstyle{definition}
\newtheorem{defn}[thm]{Definition}
\newcommand\Gm{\Gamma}
\newcommand\gm{\gamma}
\newcommand\dt{\delta}
\newcommand\ph{\varphi}
\newcommand\Ps{\Psi}
\newcommand\ps{\psi}
\newcommand\Om{\Omega}
\newcommand\CC{\mathbb{C}}
\newcommand\RR{\mathbb{R}}
\newcommand\ZZ{\mathbb{Z}}
\newcommand\SL{{\mathrm{SL}}}
\newcommand\uhp{{\mathfrak H}}
\newcommand\lhp{{\mathfrak H^-}}
\newcommand\proj[1]{\mathbb {P}^1_{#1}}
\newcommand\im{\mathrm{Im}\,}
\renewcommand\setminus{\smallsetminus}
\newcommand\oh{{\mathrm O}}
\newcommand\cf[1]{S_{\!#1}}
\newcommand\bcf[1]{\bar S_{\!#1}}
\newcommand\dsv[2]{\mathcal{D}_{#1}^{#2}}
\newcommand\Coh{{\mathbf{Coh}}}
\newcommand\ff{{\textbf f}}
\renewcommand\O{{\mathbf O}}
\newcommand\N{{\mathbf N}}
\newcommand\A{{\mathcal A}}
\newcommand\B{{\mathcal B}}
\newcommand\X{{\mathsf X}}
\newcommand\Y{{\mathsf Y}}
\renewcommand\v{{\mathbf v}}
\newcommand\w{{\mathbf w}}
\newcommand\h{{\mathbf h}}
\newcommand\matc[4]{\left( {#1\@@atop #3}{#2\@@atop #4}\right)}
\newcommand\matr[4]{\left( {\hfill #1\@@atop\hfill #3}{\hfill
#2\@@atop\hfill #4}\right)}
\newcommand\rmrk[1]{\medskip\par\noindent\emph{#1.
}}
\newcommand\rmrkn[1]{\medskip\par\noindent\emph{#1
}}
\newcommand\rmrks{\medskip\par\noindent\emph{Remarks.
}}
\newcounter{rmrkcnt}
\renewcommand\thermrkcnt{(\alph{rmrkcnt})}
\newcommand\itmi{\setcounter{rmrkcnt}{0}%
\refstepcounter{rmrkcnt}\thermrkcnt\ \ }
\newcommand\itm{\smallskip\par\noindent%
\refstepcounter{rmrkcnt}\thermrkcnt\ \ }
\definecolor{blue}{rgb}{0,0,1}
\definecolor{red}{rgb}{1,0,0}
\long\def\red#1\endred{\textcolor{red}{#1}}
\long\def\blue#1\endblue{\textcolor{blue}{#1}}
\begin{document}
\title[Multiple period integrals and cohomology]{Multiple period integrals
and cohomology}
\author{R.\,Bruggeman}
\address{Mathematisch Instituut Universiteit Utrecht, Postbus 80010, 3508 TA
Utrecht, Nederland}
\email{r.w.bruggeman@uu.nl}
\author{Y.\,Choie}
\address{Dept.~of Mathematics and PMI, Postech, Pohang, Korea  790--784}
\email{yjc@postech.ac.kr }
\date{}
\begin{abstract}
This work  gives   a version of the Eichler-Shimura isomorphism
   with    a non-abelian $H^1$  in group cohomology. Manin
has given a map from vectors of cusp forms to a noncommutative cohomology
set by means of iterated integrals.   We show 
 Manin's map is injective but far from
surjective.  By extending Manin's map we are able to construct 
a bijective map and remarkably this   establishes  the existence of a
 non-abelian
 version of the Eichler-Shimura map. 
\end{abstract}

\keywords{cusp form, iterated integral, noncommutative cohomology}

\subjclass[2010]{Primary 11F67; Secondary 11F75}

\thanks{\emph{Acknowledgement. }The second author is partially supported by
NRF-2015049582 and NRF-2013R1A2A2A01068676. }

\maketitle

\section{Introduction} In the theory of modular forms the Eichler-Shimura
isomorphism has played an important role, with many applications. For
instance, it gives integrality of eigenvalues for Hecke operators and
algebraicity of the critical values of the $L$-functions of modular forms.
 That enables the construction of$p$-adic $L$-functions, gives a connection
to Iwasawa theory as well as the computational aspects of modular form
theory, etc. The Eichler-Shimura isomorphism relates spaces of cusp forms
of integral weight to a parabolic cohomology group, namely,
$$\cf{k} (\SL_2(\mathbb{Z}))  \oplus \bcf k(\SL_2(\mathbb{Z}))
 \cong H^1_{\tiny{par}} (\Gamma, \mathbb{C}_{k-2}[X,Y]),$$
where $\mathbb{C}_{k-2}[X,Y]$ is the $\SL_2(\mathbb{Z})$-module of
homogeneous polynomials of degree $k-2$ in the indeterminates $X, Y$, and
$S_k(\SL_2(\mathbb{Z})) $
(resp. $\overline{S }_k(SL_2(\mathbb{Z}))$) is the space of holomorphic
(resp. antiholomorphic) cusp forms of weight $k.$

  Knopp and Mawi \cite{Kn74,KM10} eventually extended Eichler-Shimura
  isomorphism by establishing a canonical isomorphism between $1$-cohomology
 of cofinite discrete subgroups $\Gamma$ of $\SL_2(\mathbb{R})$ with
 appropriate holomorphic coefficients and the space of cusp forms with real
 weight.

In $2005$ Manin \cite{Ma5} defined a "non-abelian"   $H^1$
in group cohomology with values in a non-abelian group, and a 
map from a product of spaces of cusp forms to
 this cohomology set, in analogy to the Eichler-Shimura map. Manin's
construction uses iterated integrals in the spirit of the multiple zeta
values, which have proved so useful to understand zeta values, mixed Tate
 motives etc. Manin's integrals gives a way to express multiple $L$-values
 of modular forms and have been studied by the second author \cite{Ch14} and
independently in the thesis of Provost \cite{Pro} recently. In \cite{Ch14} 
the period polynomial whose coefficients are the 
multiple $L$-values were treated as an elements in non-ableian $H^1$ first time.  


In this recent 2014-ICM talk, Brown \cite{Bro} mentioned a connection
between   the   iterated integrals of Manin  and 
certain mixed motives. He explained
how to interpret motivic multiple zeta values as periods of the
pro-unipotent fundamental groupoid of the projective line minus three
 points $X=\mathbb{P}^1\backslash \{0,1,\infty\}$ via iterated integral of
smooth $1$-forms on a differentiable manifold discussed by Chen \cite{Cng}.
Also the recent work by Hain \cite{Hai} discusses the relation between
Manin's iterated integrals and the Hodge theory of modular groups. However,
  it was not clear yet how to relate Manin's iterated integral and
  Eichler-Shimura theory. Manin's map from spaces of cusp forms to cohomology
 differs in two aspects from the Eichler-Shimura map: the summand
 $ \bcf k(\SL_2(\ZZ))$ is absent and the map is injective but not
   surjective.
\medskip

This paper addresses the second  difference  by extending  Manin's map to more
complicated combinations of spaces of cusp forms to obtain a variant of the
 Eichler-Shimura isomorphism with values in a non-abelian cohomology $H^1.$
  Our main result (Theorem~\ref{thm-surj}) states that there is an extension
of Manin's map that is bijective onto a non-commutative cohomology set. 
It is remarkable that
  there exists some 
 non-abelian version of the Eichler-Shimura map.

To obtain our main result we modify Manin's construction \cite{Ma5} in
 several ways in the spirit of a variant Eichler-Shimura isomorphism
established in \cite{Kn74,KM10}: first replace the finite dimensional
spaces of polynomials by spaces of functions on the lower half-plane.
  Secondly, unlike in the classical Eichler-Shimura isomorphism,
antiholomorphic modular forms are not considered. Thirdly, we allow
automorphic forms for cofinite discrete subgroups $\Gamma$ of
$SL_2(\mathbb{R}) $ with arbitrary real weights and multiplier system.
Finally, we collapse the number of variables in the iterated integrals.
\medskip

To be more precise consider the iterated integral
\badl{Rdef0} 
 R_\ell
(f_1,&\ldots,f_\ell;y,x; t_1,\ldots,t_\ell)
\;:=\; \int_{ \tau_1= x }^{y} f_1(\tau_1)\,
  (\tau_1-t_1)^{w_1}\,
   \\
  &\hbox{} \cdot\int_{ \tau_2= x  }^{\tau_1}\, f_2(\tau_2) (\tau_2-t_2 )^{w_2}
\cdots
\int_{ \tau_\ell= x}^{\tau_{\ell-1}} f_\ell(\tau_\ell)\,
(\tau_\ell-t_\ell)^{w_\ell}\, d\tau_\ell \cdots
d\tau_2\, d\tau_1, \, \eadl
where $x, y $ are in the extended complex upper half-plane and each
 $t_j, 1\leq j \leq \ell,$ is in the lower half plane. If the $f_j$ are cusp
 forms of even integral weight $w_j+2$ the iterated integral defines a
polynomial function in the $t_j$, $1\leq j \leq \ell$ whose coefficients
are multiple $L$-values of $f_j.$ The resulting iterated integral is
holomorphic in $(t_1,\ldots,t_\ell)$ in the product of $\ell$ copies of the
lower half-plane if the $f_j$ are cusp forms of real weight.
As the order $\ell$ of the iterated integral increases the relations between
  iterated integrals become more and more complicated. However, the relations
between iterated integrals of order $\ell$ look simple modulo all products
of iterated integrals of lower order. Manin \cite{Ma5, Ma6} has shown how
to give a neat formulation for all relations  among  iterated integrals of
the type indicated in~\eqref{Rdef0}. His approach works with formal series
in non-commuting variables, and can be applied to much more general
iterated integrals than studied here.

The factors $(\tau_j-\nobreak t_j)^{w_j}$ in~\eqref{Rdef0} occur also in the
definition of cocycles attached to cusp forms. Manin attaches to vectors of
cusp forms $(f_1,\ldots,f_\ell)$ a cocycle in a noncommutative cohomology
set, and thus gives a generalization of the Eichler-Shimura map. The
cohomology has values in a non-commutative subgroup $\N(\A)$ of the unit
group of the non-commutative ring $\A$ of formal power series in
non-commuting variables $A_1,\ldots,A_\ell$ with coefficients in spaces of
holomorphic functions on the lower half-plane. The variables $A_j$
correspond to spaces of cusp form $\cf{w_j+2}\bigl( \Gm,v_j\bigr)$ with
positive real weights $w_j+2$ and corresponding multiplier systems $v_j$.
Then Manin's approach leads to a map
\be \prod_{j=1}^\ell \cf{w_j+2}\bigl( \Gm,v_j\bigr)
\longrightarrow H^1\bigl( \Gm;\N(\A)\bigr) \ee
from a product of finitely many spaces of cusp forms to a non-commutative
cohomology set. This (non-linear) map is far from surjective. In
  Theorem~\ref{thm-surj} we show that Manin's map can be extended, and that
all elements of the cohomology set $H^1\bigl( \Gm;\N(\A)\bigr)$ can be
related to combinations of cusp forms by means of iterated integrals. The
simplification $t_1=\cdots=t_\ell$ in the iterated integrals is essential
 for our methods to work.
\medskip

Sections \ref{sect-Kc} and~\ref{sect-ii} have a preliminary nature. We
review the approach of Knopp \cite{Kn74} to associate cocycles to any cusp
form of real weight and the definition of the iterated integrals that we
use. Sections \ref{sect-fs} and~\ref{sect-ncc} discuss Manin's approach to
use formal series in non-commuting variables to associate noncommutative
cocycles to vectors of cusp forms. In Section~\ref{sect-scf} we extend this
approach in such  a way that  the resulting map from 
collections of cusp forms to noncommutative cohomology is
bijective.\medskip

\noindent {\bf{Acknowledgement}} The authors would like to thank the
referees for numerours helpful comments and suggestions which greatly
improved the exposition of this paper, in particular the
introduction.

\section{Cusp forms and Theorem of Knopp and Mawi}\label{sect-Kc}

\rmrk{Discrete group} Let $\Gm$ be a cofinite discrete subgroup of
$\SL_2(\RR)$ with translations.  Without loss of generality we assume
that $\matr{-1}00{-1}\in \Gm$.  For convenience we conjugate $\Gm$
 into a position for which $\infty$ is among its cusps and such that
 the subgroup $\Gm_\infty$ of $\Gamma$ fixing $\infty$ is  generated
by  $T=\matc1101$.

\rmrk{Notation} For $w\in \RR$ and $v$ a corresponding \emph{unitary}
multiplier system we denote by $\cf{w+2}(\Gm,v)$ the space of holomorphic
cusp forms of weight $w+2$ and multiplier system $v$. This is the
finite-dimensional space of holomorphic functions $f$ on the upper
half-plane satisfying $f(\gm z) = v(\gm) \, (cz+\nobreak d)^{w+2}\, f(z)$
for $\gm \in \Gm$, with exponential decay upon approach of the cusps. If
the weight $w+2$ is integral a multiplier system is a character.

\rmrk{Functions with at most polynomial growth} By $V(v,w)$, with $w\in \RR$
and $v$ a corresponding multiplier system, we denote the space of
holomorphic functions on the lower half-plane $\lhp$ with at most
polynomial growth at the boundary $\proj\RR$ of $\lhp$, provided with the
action of $\gm=\matc abcd\in \Gm$ given by
\be f |_{v,-w}\gm\,(t) \= v(\gm)\,
 (ct+d)^w\, f(\gm t)   \,.\ee

The condition that $f$ has polynomial growth on $\lhp$ can be formulated as
\be\label{pg} f(t) = \oh\bigl(|t|^A\bigr)+ \oh\bigl( |\im
t|^{-A}\bigr)\qquad \text{ for all} \, t\in \lhp, \text{ for some }A\geq
0\,.\ee
 The action
$|_{v,-w}$ of $\Gamma$ preserves this condition.

\rmrks
\itmi
In \cite[\S1.4]{BCD} we denote the representation $V(v,w)$  of $\Gm$
by~$\dsv{v,-w}{-\infty}$ (actually we used $r=w+2$ as the main parameter,
and wrote $\dsv{v,2-r}{-\infty}$.)
\itm The polynomial growth condition in (\ref{pg}) can be formulated
 in terms of an estimate by one function
$Q(t) = |\im t|/|t-\nobreak i|^2$ as $f(t) = \oh\bigl( Q(t)^{-A}\bigr)$ for
some $A\geq 0$. See the discussion in \cite[\S1.5]{BCD}.
 
\rmrk{Knopp's cocycles associated to cusp forms} Knopp \cite{Kn74}
associated to cusp forms $f\in \cf {w+2}(\Gm,v)$ a cocycle $\bar\ps_f$
 given by
\[ \bar\ps_{f,\gm} (z) \=\overline{ \int_{\gm^{-1}\infty}^\infty f(\tau)\,
(\tau-\bar z)^w\, d\tau }\,.\]
This cocycle takes values in the holomorphic functions on the upper
half-plane $\uhp$ that have at most polynomial growth on $\uhp$
   in the sense of (\ref{pg})
  (now with $t$ replaced   by
$  z \in \uhp$). We avoid the complex conjugation by taking a cocycle with
values in the holomorphic functions on the lower half-plane $\lhp$ with at
most polynomial growth at the boundary:
\be\label{psidef} \ps_{f,\gm} (t) \= \int_{\tau=\gm^{-1}\infty}^\infty
f(\tau)\,
(\tau-t)^w\, d\tau\,.\ee
So $\ps_f$ has values in the $\Gm$-module $V(v,w)$.

\begin{thm}\label{thm-KM}{ \rm (Knopp and Mawi \cite{KM10}) }For real weight
$w+2$ and corresponding unitary multiplier system $v$ the map
$f \mapsto [\ps_f]$ determines a linear bijection
\[ \cf{w+2}(\Gm,v) \longrightarrow H^1\bigl(\Gm;V(v,w)\bigr)\,.\]
\end{thm}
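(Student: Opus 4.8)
The plan is to prove the three assertions packaged in the word ``bijection'': that $f\mapsto\ps_f$ lands in cocycles on $\Gm$ valued in $V(v,w)$, that the induced map on cohomology is injective, and that it is surjective, the last being the real content. For well-definedness, fix $t\in\lhp$: the integrand in \eqref{psidef} is holomorphic in $t$ and $f$ decays exponentially as $\tau$ tends to the cusps, so differentiation under the integral sign shows $\ps_{f,\gm}$ is holomorphic on $\lhp$; bounding $|(\tau-t)^w|$ in terms of $|t|$ and $|\im t|^{-1}$ along the geodesic from $\gm^{-1}\infty$ to $\infty$ gives the growth condition \eqref{pg}, so $\ps_{f,\gm}\in V(v,w)$. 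For the cocycle relation $\ps_{f,\gm\dt}=\ps_{f,\gm}|_{v,-w}\dt+\ps_{f,\dt}$, writing $\dt=\matc abcd$, I would split $\int_{(\gm\dt)^{-1}\infty}^{\infty}=\int_{\dt^{-1}\gm^{-1}\infty}^{\dt^{-1}\infty}+\int_{\dt^{-1}\infty}^{\infty}$, substitute $\tau=\dt^{-1}\sigma$ in the first piece, and combine the automorphy of $f$ with the elementary identity $\dt^{-1}\sigma-t=(\sigma-\dt t)(ct+d)(-c\sigma+a)^{-1}$; the powers of $(-c\sigma+a)$ cancel and, by unitarity of $v$, the remaining factors reassemble into $\ps_{f,\gm}|_{v,-w}\dt$, while the second piece is $\ps_{f,\dt}$. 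Linearity is immediate. This step is routine, the only care needed being with the branch of the real power $(\,\cdot\,)^{w}$ and the multiplier conventions.

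For injectivity I would use that $\ps_f$ is the coboundary of a suitable Eichler integral $E_f$, holomorphic on $\lhp$, with $E_f|_{v,-w}\gm-E_f=\ps_{f,\gm}$. If $[\ps_f]=0$, say $\ps_{f,\gm}=h|_{v,-w}\gm-h$ with a single $h\in V(v,w)$, then $E_f-h$ is invariant under $|_{v,-w}$ and holomorphic on $\lhp$. Here $h$ has only polynomial growth whereas $E_f$ carries the genuine analytic information of the cusp form; at $\infty$ one has $\ps_{f,T}=0$ since $T$ fixes $\infty$, so $h$ is multiplier-periodic there, and matching this against the behaviour of $E_f$ through \eqref{pg} forces the invariant $E_f-h$ to be trivial, whence $f\equiv 0$. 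Conceptually the same conclusion follows from a Haberland-type cup product expressing the Petersson norm of $f$ in terms of $\ps_f$: coboundaries pair to zero, so $[\ps_f]=0$ gives $\langle f,f\rangle=0$.

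Surjectivity is where the work lies. Although the target is the full $H^1(\Gm;V(v,w))$, I would first normalise an arbitrary cocycle $\ps$ to a parabolic one: the value $\ps_T\in V(v,w)$ is holomorphic with polynomial growth, so the difference equation $v(T)\,h(t+1)-h(t)=\ps_T(t)$ can be solved for $h\in V(v,w)$ by a summation operator, the polynomial growth of the solution being guaranteed by that of $\ps_T$; doing this at every cusp and subtracting the coboundaries, I may assume $\ps$ vanishes on all parabolic generators. It is precisely the richness of $V(v,w)$ that makes this possible and that collapses the full $H^1$ onto the parabolic one, so that cusp forms can account for everything. It then remains to manufacture a holomorphic cusp form $f$ with $[\ps_f]=[\ps]$; the route I would take is analytic, producing a smooth $\Gm$-equivariant primitive of $\ps$ (the cohomology of $\Gm$ with the larger $C^\infty$-coefficient module vanishes, by a partition of unity on $\Gm\backslash\uhp$) and then reading off $f$ from the $\bar\partial$ of this primitive, which descends to an automorphic object of the complementary weight. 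The main obstacle is growth control at the cusps throughout: one must choose the smooth primitive and invert $\bar\partial$ so that the resulting form has the exponential decay needed to lie in $\cf{w+2}(\Gm,v)$ rather than in some larger space, and since $w$ is real there is no finite-dimensional polynomial bookkeeping to fall back on, so the estimates \eqref{pg} must be used quantitatively. This is exactly the point at which I would lean on Knopp's original construction and the estimates recorded in \cite{BCD,Kn74}.
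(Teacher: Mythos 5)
There is nothing in the paper to compare your argument against: the paper does not prove Theorem~\ref{thm-KM} at all. It is imported from the literature --- immediately after the statement the authors note that Knopp \cite{Kn74} conjectured the result and proved many cases, and that the remaining cases were completed by Knopp and Mawi \cite{KM10} --- and it is then used as a black box, both in the proof of Proposition~\ref{prop-inj} and at \eqref{phipa} inside the proof of Theorem~\ref{thm-surj}. So your proposal must be judged as a free-standing reconstruction of the Knopp--Mawi theorem, and as such it has genuine gaps at exactly the places where the real work sits.

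For injectivity, note first that your $E_f$ cannot lie in $V(v,w)$: a holomorphic cobounding function of polynomial growth would make every class $[\ps_f]$ trivial and refute the theorem itself. So $E_f$ must be holomorphic with super-polynomial growth (such a primitive does exist, since $H^1$ with coefficients in \emph{all} holomorphic functions on $\lhp$ vanishes), and then your step ``matching this against the behaviour of $E_f$ through \eqref{pg} forces the invariant $E_f-h$ to be trivial'' is a gap, not a proof: a $|_{v,-w}$-invariant holomorphic function on $\lhp$ with unrestricted boundary growth need not vanish. Concretely, for $\Gm=\SL_2(\ZZ)$, $w=12$, $v=1$, the function $t\mapsto \overline{\Delta(\bar t)}^{\,-1}$ is holomorphic on $\lhp$, nowhere zero, $|_{1,-12}$-invariant, and of exponential growth. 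For integral weight one would now invoke Bol's identity to convert such an invariant into a form of weight $w+2$, but for real $w$ no such differential operator exists; this is why Knopp's actual argument runs through your \emph{second} route, the Stokes/Petersson pairing, and that route needs the smooth, non-holomorphic primitive of polynomial growth (Knopp's supplementary function), not a holomorphic one. For surjectivity, the reduction to parabolic cocycles is not achieved by ``a summation operator'': with $\ps_T$ merely of polynomial growth the telescoping series $-\sum_{n\ge0}v(T)^{-n-1}\ps_T(t+n)$ diverges, so solving the difference equation inside $V(v,w)$ requires a regularized (N\"orlund-type) construction; the equality $H^1=H^1_{\mathrm{par}}$ for this module is itself a theorem of Knopp, which the paper records in the last remark following Theorem~\ref{thm-KM}. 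Finally, the heart of surjectivity --- building the smooth equivariant primitive and inverting $\bar\partial$ while keeping the growth condition \eqref{pg}, so that the resulting weight $w+2$ object is a genuine cusp form --- is exactly what you defer to \cite{Kn74,KM10,BCD}. Since that is the entire analytic content of the theorem (it is what took from 1974 until 2010 to complete), your proposal is a reasonable outline of the known strategy, but the decisive steps are missing.
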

Knopp \cite{Kn74} conjectured this result, and proved it for many cases.
Finally, the remaining cases were completed in~\cite{KM10}.

\rmrks
\itmi A multiplier system  $v$ is called \emph{unitary}
 if  $|v(\gm)|=1, \forall \gm\in \Gamma.$

\itm Since $\cf{w+2}(\Gm,v)=\{0\}$ for $w+2\leq 0$, the theorem implies that
the cohomology groups vanish as well for $w+2\leq 0$.

\itm If $w\in \ZZ_{\geq 0}$ the cocycles take values in polynomial functions
on $\lhp$  , which for the trivial multiplier system form a submodule of
$V(1,w)$ isomorphic to $\CC_w[X,Y]$.  

If the multiplier system $v$ has values only in $\{1,-1\}$ then conjugation
gives cocycles in the same module. The Eichler-Shimura theory gives the
parabolic cohomology group with values in polynomial functions of degree at
most $w$ as the direct sum of the images of the two maps
$f\mapsto [\ps_f] $ and $f\mapsto [\bar\ps_f]$. However, in the large
module of polynomially growing functions the cocycles $\bar\ps_f$ become
coboundaries. Also the cocycles associated to Eisenstein series become
coboundaries over the module of functions with at most polynomial growth.

\itm Knopp \cite{Kn74} shows also that the parabolic cohomology group
$H^1_{\tiny{par}}\bigl(\Gm;V(v,w)\bigr)$ is    equal   to   the cohomology group
$H^1\bigl(\Gm;V(v,w)\bigr)$.

\section{Iterated integrals}\label{sect-ii}
By taking
 $t_1=\cdots =t_\ell=t$   we  consider holomorphic function in $t$
 running through the lower half-plane:
\badl{Rdef1} R_\ell
(f_1,&\ldots,f_\ell;y,x; t)
\;:=\; \int_{\tau_1=x}^y f_1(\tau_1)\,
  (\tau_1-t)^{w_1}\,
   \\
  &\hbox{} \cdot \int_{\tau_2=x}^{\tau_1}\, f_2(\tau_2) (\tau_2-t )^{w_2}
\cdots
\int_{\tau_\ell=x}^{\tau_{\ell-1}} f_\ell(\tau_\ell)\,
(\tau_\ell-t)^{w_\ell}\, d\tau_\ell \cdots
d\tau_2\, d\tau_1\,. \eadl
It is a multilinear form on $\prod_{j=1}^\ell\cf{w_j+2}(\Gm,v_j)$ for
$\ell$~pairs $(v_1,w_1),\ldots,\allowbreak (v_\ell,w_\ell)$ of real numbers
$w_j$ and corresponding unitary multiplier systems~$v_j$. The parameter $t$
is in the lower half-plane~$\lhp$. The value of the iterated integral does
not depend on the path of integration, provided we take care to approach
 cusps along geodesic half-lines
(for instance vertically).

The most interesting case is 
$y=\gm^{-1}\infty$, $\gm\in \Gm$, and $x=\infty$. For $\ell=1$ this gives
the value $\psi_{f_1,\gm}$ of the cocycle in~\eqref{psidef}. That is why we
call $R_\ell(f_1,\ldots,f_\ell;\gm^{-1}\infty,\infty;t)$ a \emph{multiple
period integral}.

\rmrk{Functions with at most polynomial growth} The condition of polynomial
growth in \eqref{pg} is preserved by the action of $\Gm$ given for
$\gm=\matc abcd$ by
\badl{pact} h|_{\v,-\w}\gm (t) &\= \v(\gm)^{-1}\, (ct+d)^\w h(\gm t)\,,\\
\v(\gm) &\= v_1(\gm)v_2(\gm)\cdots v_\ell(\gm)\,,\qquad \w \=
w_1+w_2+\cdots+w_\ell\,.
\eadl

By $V(\v,\w)$ we denote the vector space of holomorphic functions on $\lhp$
  with the action $|_{\v,-\w}$ in~\eqref{pact}.   Multiplication of
functions gives a bilinear map
$V(\v;\w)\times V(\v';\w')\rightarrow V(\v\v';\w+\w')$. The action behaves
according to the following rule:
\be \label{mlta}
(h|_{\v,-\w}\gm)\;(h'|_{\v',-\w'}\gm) =
(hh')|_{\v\v',-\w-\w'}\gm\,.\ee

\begin{lem}\label{lem-pg}For
$\ff=(f_1,\ldots,f_\ell) \in \prod_{j=1}^\ell \cf{w_j+2}(\Gm,v_j)$ the
multiple period integral $R_\ell(\ff;y,x;\cdot) $
defines an element of $V(\v,\w)$.
\end{lem}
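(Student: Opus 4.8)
The plan is to verify directly the two defining properties of $V(\v,\w)$: that $t\mapsto R_\ell(\ff;y,x;t)$ is holomorphic on $\lhp$, and that it satisfies the polynomial growth bound~\eqref{pg}. (The action $|_{\v,-\w}$ is extra structure on the space, not a constraint on the individual function, so it plays no role in the verification.) A short consistency check: for $\ell=1$, $y=\gm^{-1}\infty$, $x=\infty$ the integral is exactly Knopp's cocycle value $\ps_{f_1,\gm}$ of~\eqref{psidef}, which already lies in $V(v_1,w_1)$, so the lemma generalizes that case.

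First I would settle convergence and holomorphy. I parametrize the path so that each $\tau_j$ runs through $\uhp$; then for $t\in\lhp$ one has $\im(\tau_j-t)=\im\tau_j-\im t>0$, so $\tau_j-t$ stays in $\uhp$, away from the branch cut, and $(\tau_j-t)^{w_j}=e^{w_j\log(\tau_j-t)}$ is well-defined and holomorphic in $t$ for the branch of $\log$ with argument in $(0,\pi)$. Since each $f_j$ is a cusp form, its exponential decay at the cusps dominates the at most polynomial factor $|\tau_j-t|^{w_j}$, so the iterated integral converges absolutely, uniformly for $t$ in compact subsets of $\lhp$; holomorphy in $t$ then follows by differentiation under the integral sign (or Morera's theorem applied to the uniform limit of holomorphic partial integrals).

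The substance is the growth estimate. Bounding the ordered (simplex) integral by the product of the individual integrals, $|R_\ell(\ff;y,x;t)|\le\prod_{j=1}^\ell\int|f_j(\tau_j)|\,|\tau_j-t|^{w_j}\,|d\tau_j|$, I would estimate each power factor by separating its dependence on $t$ from its dependence on $\tau_j$. Using $|\tau_j-t|\le(1+|\tau_j|)(1+|t|)$ when $w_j\ge 0$, and the lower bound $|\tau_j-t|\ge|\im t|$ when $w_j<0$, one gets in all cases $|\tau_j-t|^{w_j}\le(1+|\tau_j|)^{|w_j|}\,m_j(t)$, where $m_j(t)=(1+|t|)^{w_j}$ for $w_j\ge 0$ and $m_j(t)=|\im t|^{w_j}$ for $w_j<0$. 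The remaining integrals $\int|f_j(\tau_j)|\,(1+|\tau_j|)^{|w_j|}\,|d\tau_j|$ are finite by the cusp decay, so $|R_\ell(\ff;y,x;t)|\le C\prod_j m_j(t)=C\,(1+|t|)^{\w^+}|\im t|^{-\w^-}$, with $\w^+=\sum_{w_j\ge 0}w_j$ and $\w^-=\sum_{w_j<0}|w_j|$. An elementary inequality of the type $ab\le\tfrac12(a^2+b^2)$ turns this mixed bound into the form $\oh(|t|^A)+\oh(|\im t|^{-A})$ demanded by~\eqref{pg} (equivalently into the $\oh(Q(t)^{-A})$ of Remark (b)), whence $R_\ell(\ff;y,x;\cdot)\in V(\v,\w)$.

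I expect the main obstacle to be the bookkeeping in this growth estimate: controlling the factors $(\tau_j-t)^{w_j}$ uniformly as $t$ approaches every part of the boundary $\proj\RR$ — both finite real points, where $\im t\to 0$ and the lower bound $|\tau_j-t|\ge|\im t|$ is needed for negative $w_j$, and the cusp $\infty$, where $|t|\to\infty$. Care is also needed that the path approaches cusps along geodesics, so that the cusp-form decay genuinely makes the $\tau_j$-integrals converge after the polynomial factors $(1+|\tau_j|)^{|w_j|}$ are inserted; at cusps other than $\infty$ this is checked after applying the transformation law of $f_j$ under $\Gm$ to reduce to decay at $\infty$.
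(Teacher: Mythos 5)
Your proof is correct, and it reaches the conclusion by a different decomposition than the paper's. The paper's proof is a short induction from the innermost integral outward: the exponential decay of each cusp form at the cusps (approached along geodesics) absorbs the polynomial kernel factor, so the innermost integral has at most polynomial growth jointly in $t$ and $\tau_{\ell-1}$, and this property propagates successively through the nested integrals until the outer variable is reached. You instead make one global estimate: after fixing a single path from $x$ to $y$ (justified by path-independence, so that the nested integrals become a Chen-type integral over the ordered simplex), you bound the simplex integral by the product of $\ell$ independent one-variable integrals, and then split each kernel factor as $|\tau_j-t|^{w_j}\le(1+|\tau_j|)^{|w_j|}\,m_j(t)$, using $|\tau_j-t|\le(1+|\tau_j|)(1+|t|)$ for $w_j\ge0$ and the lower bound $|\tau_j-t|\ge|\im t|$ (valid since $\tau_j\in\uhp$, $t\in\lhp$) for $w_j<0$. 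Both arguments rest on the same two inputs, namely cusp-form decay and the harmlessness of the kernels, but yours buys an explicit bound $C\,(1+|t|)^{\w^+}|\im t|^{-\w^-}$ from which \eqref{pg} follows by elementary manipulation, and it avoids having to formulate and track the inductive statement ``polynomial growth jointly in $\tau_{j-1}$ and $t$'' with its implicit uniformity; the cost is that you must invoke path-independence to set up the simplex-in-cube bound, a step the paper's nested-estimate formulation never needs. You also verify holomorphy in $t$ and the choice of branch of $(\tau_j-t)^{w_j}$ explicitly, points the paper's proof leaves implicit; that is a sound addition rather than a deviation.
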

\begin{proof}Each cusp form has at most polynomial growth on $\uhp$, and has
exponential decay at cusps when the cusp is approached along a geodesic
half-line. This implies that the  iterated integral in  \eqref{Rdef1} has at
most polynomial growth in $t$ and $\tau_{\ell-1}$. Successively this also
implies polynomial growth in $\tau_{j-1}$ and $t$ of the further integrals.
\end{proof}

\rmrk{Trivial relation}Directly from the definition we have
\be\label{trivrel} R_\ell(\ff;x,x;t)=0\,.\ee

\begin{lem}\label{lem-inv}For $\gm\in \Gm$
\be R_\ell(\ff;\gm^{-1}y,\gm^{-1}x;t) \=
R_\ell(\ff;y,x;\cdot)|_{\v,-\w}\gm\,(t)\,.
\ee
\end{lem}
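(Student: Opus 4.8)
The plan is to prove the identity by the change of variables $\tau_j = \gm^{-1}\sigma_j$ (equivalently $\sigma_j=\gm\tau_j$) carried out simultaneously in all $\ell$ of the nested integrals defining $R_\ell(\ff;\gm^{-1}y,\gm^{-1}x;t)$. First I would check that this substitution transports the limits correctly: the outer endpoints $\gm^{-1}x,\gm^{-1}y$ become $x,y$, and each inner upper limit $\tau_{j-1}=\gm^{-1}\sigma_{j-1}$ becomes $\sigma_{j-1}$, so the nested shape of the integral is preserved exactly. Since the value of the iterated integral does not depend on the chosen paths (provided cusps are approached along geodesics), it then suffices to track how the integrand transforms together with the endpoints.

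With $\gm=\matc abcd$, so that $\gm^{-1}\sigma=\frac{d\sigma-b}{a-c\sigma}$, I would record the three elementary rules governing the $j$-th factor under the substitution. First, the differential transforms by $d(\gm^{-1}\sigma)=(a-c\sigma)^{-2}\,d\sigma$. Second, the automorphy of the cusp form gives $f_j(\gm^{-1}\sigma)=v_j(\gm)^{-1}(a-c\sigma)^{w_j+2}f_j(\sigma)$, which follows from $f_j(\gm z)=v_j(\gm)(cz+d)^{w_j+2}f_j(z)$ together with the cocycle value $c\,\gm^{-1}\sigma+d=(a-c\sigma)^{-1}$. Third, and this is the key computation, the power factor transforms by the identity
\[ \gm^{-1}\sigma - t \= \frac{(ct+d)\,(\sigma-\gm t)}{a-c\sigma}\,, \]
which one verifies by clearing denominators and comparing numerators; raising to the power $w_j$ then yields $(\gm^{-1}\sigma-t)^{w_j}=(ct+d)^{w_j}(\sigma-\gm t)^{w_j}(a-c\sigma)^{-w_j}$.

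Multiplying these three contributions, the powers of $(a-c\sigma)$ cancel completely, since $(w_j+2)-w_j-2=0$, so the $j$-th factor collapses to $v_j(\gm)^{-1}(ct+d)^{w_j}f_j(\sigma)(\sigma-\gm t)^{w_j}\,d\sigma$. Collecting all $\ell$ factors, the scalars $\prod_j v_j(\gm)^{-1}=\v(\gm)^{-1}$ and $\prod_j (ct+d)^{w_j}=(ct+d)^\w$ pull out of the whole iterated integral, and what remains is precisely $R_\ell(\ff;y,x;\gm t)$ expressed in the new variables $\sigma_j$. Hence $R_\ell(\ff;\gm^{-1}y,\gm^{-1}x;t)=\v(\gm)^{-1}(ct+d)^\w R_\ell(\ff;y,x;\gm t)$, which by the definition~\eqref{pact} of the action is exactly $R_\ell(\ff;y,x;\cdot)|_{\v,-\w}\gm\,(t)$.

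The main obstacle is not the algebra but the consistency of branches for the non-integer powers: the factorization of $(\gm^{-1}\sigma-t)^{w_j}$, the cancellation of $(a-c\sigma)^{w_j+2}$ against $(a-c\sigma)^{-w_j}$ and $(a-c\sigma)^{-2}$, and the collapse $\prod_j(ct+d)^{w_j}=(ct+d)^\w$ must all hold as exact equalities, not merely up to a root of unity. I would therefore fix once and for all a branch convention for $z\mapsto z^{w_j}$ adapted to the regions in which the arguments lie — with $\sigma$ on a path in $\uhp$ and $t\in\lhp$ one has $\gm t\in\lhp$, so $\sigma-\gm t$ has positive imaginary part — and verify that along the path of integration each of the factors $ct+d$, $\sigma-\gm t$, and $a-c\sigma$ stays in a half-plane on which the chosen branch is single-valued, so that multiplicativity of the power is genuinely exact.
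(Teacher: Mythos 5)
Your proof is correct and takes essentially the same route as the paper: the paper's proof is exactly this change of variables (performed as $\tau_j\mapsto\gm\tau_j$ starting from the right-hand side, rather than your inverse substitution $\tau_j=\gm^{-1}\sigma_j$ starting from the left), combining the automorphy of the $f_j$, the transformation of $d\tau_j$, and the identity $\gm\tau-\gm t=(\tau-t)/\bigl((c\tau+d)(ct+d)\bigr)$, which is equivalent to your key factorization of $\gm^{-1}\sigma-t$. Your closing care about branch consistency of the non-integer powers is a point the paper leaves implicit (it is absorbed into the consistency conditions built into the multiplier-system formalism), but it does not change the argument.
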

\begin{proof}In the following computation all $\tau_j$ are replaced by
$\gm\tau_j$, with $\gm=\matc abcd \in \Gm$.
 \begin{align*}
 R_\ell(\ff&;x,y;\cdot)|_{\v,-\w}\gm \,(t)
 \= \prod_{j=1}^\ell \Bigl( v_j(\gm)^{-1}\,
 (ct+d)^{w_j} \Bigr)
 \int_{\tau_1=x}^y f_1(\tau_1)\, (\tau_1-\gm t)^{w_1} \\
 &\quad\hbox{} \cdots
 \int_{\tau_\ell=x}^{\tau_{\ell-1}} f_\ell(\tau_\ell)\, (\tau_\ell  -\gm
 t )^{w_\ell}\, d\tau_\ell\cdots d\tau_1
 \displaybreak[0]
 \\
 &\= \prod_{j=1}^\ell \Bigl( v_j(\gm)^{-1}\,
 (ct+d)^{w_j}  \Bigr)
 \int_{\tau_1=\gm^{-1}x}^{\gm^{-1}y} f_1(\gm \tau_1)\,
 \frac{(\tau_1-t)^{w_1}}{
 (c\tau_1+d)^{w_1}\, (ct+d)^{w_1} }\\
 &\quad\hbox{} \cdots
 \int_{\tau_\ell=\gm^{-1}x}^{\gm^{-1}(\gm \tau_{\ell-1})} f_\ell(\gm
 \tau_\ell)\, \frac{(\tau_\ell-t)^{w_\ell}}{
 (c\tau_\ell+d)^{w_\ell}\,
 (ct+d)^{w_\ell} } \,\frac{d\tau_\ell}{(c\tau_\ell+d)^2}\cdots
 \frac{d\tau_1}{(c\tau_1+d)^2}\displaybreak[0]\\
 &\=R_\ell(\ff;\gm^{-1}y,\gm^{-1}x;t)\,.
 \end{align*}
 \end{proof}

\rmrk{Cocycles}For $\ell=1$ we get the cocycle $\ps_f$ in \eqref{psidef}:
\be\label{o1coc}
\ps_{f,\gm}(t) = - R_1(f;\gm^{-1}\infty,\infty;t)\,. \ee

\rmrk{Decomposition} It is easy to see that the cocycles in \eqref{psidef}
satisfy the cocycle relation $c_{\gm\dt}=c_\gm|\dt+c_\dt$ for
$\gm,\dt\in \Gm$: Use the decomposition relation
$\int_b^a + \int_c^b = \int_ c^a$ for integrals together with the
invariance relation in~Lemma~\ref{lem-inv}.

  There are \emph{decomposition relations} for the iterated integrals in
\eqref{Rdef1}, which can be obtained by application of the decomposition
relation for integrals of one variable to the subintegrals
in~\eqref{Rdef1}. For the orders $2$ and $3$ these relations take the
following form:
 \begin{align}\label{rel2}
R_2(f_1,&f_2;z,y;t) + R_2(f_1,f_2;y,x;t) \\
\nonumber
&\quad\hbox{}- R_2(f_1,f_2;z,x;t)
\= R_1(f_1;z,y;t)\; R_1(f_2;y,x;t)\,,
\displaybreak[0]\\
\label{rel3}
R_3(f_1,&f_2,f_3;z,y;t) + R_3(f_1,f_2,f_3;y,x;t)\\
\nonumber
&\qquad\quad\hbox{}
- R_3(f_1,f_2,f_2;z,x;t)\\
\nonumber
& \=
-R_1(f_1;z,y;t)\; R_2(f_2,f_3;y,x;t)\\
\nonumber
&\qquad\hbox{}
+ R_2(f_1,f_2;z,y;t)\; R_1(f_3;y,x;t)\,.
  \end{align}
We have written these relations in such a way that the quantity on the left
should be zero if the standard decomposition would hold. On the right is a
correction term consisting of products of iterated integrals of lower
order.

\rmrk{Example}The decomposition relations can be used to obtain relations
between values of multiple $L$-functions at special points, as studied
in~\cite{Ch14} and in the thesis by N. Provost \cite{Pro} independently.

Let us take $\Gm=\SL_2(\ZZ)$, and assume that $v_1=v_2=1$, and
$w_1,w_2\in 2\ZZ_{\geq 0}$. This implies that the multiple integrals yield
polynomial functions   in the variable~$t $. 
We apply \eqref{rel2} with $z=x=\infty$
and $y=0$. With \eqref{trivrel}
\[ R_2(f_1,f_2;\infty,0;t) + R_2(f_1,f_2;0,\infty;t)
\= R_1(f_1;\infty,0;t) \, R_1(f_2;0,\infty;t)\,.\]
Using the binomial theorem, we see that $R_1(f;\infty,0;t)$ is a polynomial
in $t$ with coefficients that can be expressed in values of completed
$L$-functions. In a similar way, $R_2(f_1,f_2;\infty,0;t)$ is a polynomial
in $t$ with coefficients that can be expressed in values of a completed
multiple $L$-function of order $2$ as defined in \cite[(2.6)]{Ch14}. With
Lemma~\ref{lem-inv}
$R_2(f_1,f_2;0,\infty;\cdot) = R_2(f_1,f_2;\infty,0;\cdot)|_{-w}S$,
where $S=\matr0{-1}10$. In this way, the decomposition relation \eqref{rel2}
implies the equality of two polynomials. Comparing coefficients leads to
the relation in \cite[Theorem 3.1]{Ch14}.

This account is a simplification. The decomposition relations are valid for
the iterated integrals in~\eqref{Rdef0}, and lead for
$w_j \in 2\ZZ_{\geq 0}$ to polynomials in two variables. In \cite{Ch14}
Choie works in that generality.

\section{Formal series}\label{sect-fs}


 Manin \cite{Ma5, Ma6} has indicated a way to give structure to the
 decomposition relations of any order. His approach works in a general
 context of iterated integrals associated to cusp forms. The factors
 $(\tau_j-\nobreak t)^w$ of the kernel in \eqref{Rdef1} and
 $(\tau_j-\nobreak t_j)$ in \eqref{Rdef0} may be replaced by more general
 factors, for instance, by factors leading to iterated $L$-integrals as
 studied in~\cite{Ch14}. Here we use Manin's formalism for the iterated
 integrals in \eqref{Rdef1}.\medskip

We keep fixed $\ell$ combinations of a weight $w_j+2\in \RR$ and a
corresponding unitary multiplier system $v_j$. For a vector
$\ff=(f_1,\ldots,f_\ell) \in \prod_{j=1}^\ell \cf{w_j+2}(\Gm,v_j)$ of
length $\ell$ we form iterated integrals of arbitrary order
\be \label{mitint}
R_n(f_{m_1},f_{m_2},\ldots,f_{m_n}; y,x; t)\ee
for any choice $m=(m_1,\ldots,m_n) \in \{1,\ldots,\ell\}^n$ for any
$n\geq 0$. For $n=0$ we define this quantity to be~$1$. The same $f_j$ may
occur several times as $f_{m_i}$. So we do not get linearity in $f_j$. The
result is a holomorphic function on $\lhp$, and has at most polynomial
growth by Lemma~\ref{lem-pg}.

To formulate the $\Gm$-equivariance we put for $m=(m_1,m_2,\ldots,m_n)$
\be\label{vvww} \v(m) \;:=\; v_{m_1}v_{m_2}\cdots v_{m_n}, \qquad \w(m)
\;:=\; w_{m_1}+w_{m_2}+\cdots w_{m_n}\,. \ee
We consider the iterated integral in \eqref{mitint} as an element of
$V\bigl(\v(m),\w(m)\bigr)$. For the empty sequence $m=()$ we put
$V\bigl(\v(),\w() \bigr)=\CC$ with the trivial action $|_{1,0}$.
Multiplication follows the rule in~\eqref{mlta}. Lemma~\ref{lem-inv} can be
applied.

\rmrk{Power series in noncommuting variables} We choose $\ell$ spaces of
cusp forms $\cf{w_j+2}(\Gm,v_j)$ with $w_j+2>0$ and unitary multiplier
systems $v_j$, for $1\leq j \leq \ell$. We indicate this choice by the
symbol~$\A$. For this choice $\A$ we take $\ell$ noncommuting variables
$A_1,A_2,\ldots,A_\ell$.

By $\O(\A)$ we denote the set of formal power series in the $A_j$ for which
the coefficient of the monomial $A_{m_1}A_{m_2}\cdots A_{m_n}$
is in $V\bigl(\v(m),\w(m)\bigr)$ for each $m\in \{1,\ldots,\ell\}^n$. The
constant term is in $V\bigl(\v(),\w()\bigr)=\CC$. The relation \eqref{mlta}
implies that $\O(\A)$ is a ring.

\rmrk{Formal series associated to vectors of cusp forms} Following Manin we
combine all iterated integrals in~\eqref{mitint} as coefficients of an
element of the ring $\O(\A)$. Let
\be\label{SA} \cf \A(\Gm) \= \prod_{j=1}^\ell \cf{w_j+2}(\Gm,v_j)\,.\ee
For $\ff=(f_1,\ldots,f_\ell) \in\cf\A(\Gm)$   define the formal series
$J(\ff;y,x;t) \in \O(\A)$ by
\badl{Jdef} J(\ff;& y,x;t) \= 1\\
&\hbox{}+\sum_{n\geq 1} \sum_{m_1,\ldots,m_n \in \{1,\ldots,\ell\}}
 R_n(f_{m_1},f_{m_2},\ldots,f_{m_n};y,x; t)\, A_{m_1}A_{m_2}\cdots
A_{m_n}\,.\eadl

\rmrks
\itmi $J(\ff;z,w;\cdot)$ is an invertible element of $\O(\A)$ since it has a
non-zero constant term.

\itm
The coefficients  $ R_n(f_{m_1}, \ldots,f_{m_n};y,x;t)$ are continuous 
functions of  
$y,x\in \uhp^\ast$,   and are holomorphic in $x,y\in\uhp$.  

\itm The $A_j$   codes  for the space $\cf{w_j+2}(\Gm,v_j)$. This approach
differs from Manin's in \cite[\S2]{Ma5}. There the formal variables code
for linearly independent elements of the space
$\prod_j \cf{w_j+2}(\Gm,v_j)$.

\rmrk{Action of $\Gm$}We have define an action of $\Gm$ on $\O(\A)$ by the
action $|_{\v(m), - \w(m)}$ on the coefficient of $A_{m_1}\cdots A_{m_n}$.
Lemma \ref{lem-inv} implies the relation
\be\label{Jinv}
J(\ff;\gm^{-1}y,\gm^{-1}x;\cdot) = J(\ff;y,x;\cdot)|\gm\qquad\text{
for each }\gm\in \Gm\,.\ee

\rmrk{Multiplication properties}These formal series satisfy for
$z,y,x \in \uhp^\ast$:
\begin{align}
\label{mp-id}
J(\ff;x,x ; t )&\=1\,,
\displaybreak[0]\\
\label{mp-inv}
J(\ff;x,y ; t)&\= J(\ff;y,x ; t )^{-1}\,,\displaybreak[0]\\
\label{mp-mult}
J(\ff;z,x ; t )&\= J(\ff;z,y; t )\, J(\ff;y,x ; t)\,.
\end{align}
We will prove a more general result in Proposition~\ref{prop-Jprop}.

These relations encapsulate infinitely many relations between multiple
period integrals. The reader who takes the trouble to compare the
coefficients of $A_1 A_2$ in \eqref{mp-mult} obtains the
relation~\eqref{rel2}. Similarly, relation \eqref{rel3} is given by the
coefficient of $A_1 A_2 A_3$.

\subsection{Commutative example}\label{sect-ceJ}In the modular case we may
look at $w=\frac12 N-2$ for some $N\in \ZZ_{\geq 1}$. As the corresponding
multiplier system we choose $v_{N/2}$ determined by
\be v_{N/2}\matc1101\=e^{\pi i N/12}\,,\qquad v_{N/2} \matr0{-1}10\= e^{-\pi
i N/4}\,. \ee
For $1\leq N \leq 24$ the space of cusp forms is one-dimensional:
$\cf{N/2+2}\bigl( \Gm(1), v_{N/2}\bigr) \= \CC \; \eta^N$, where
$\eta(\tau) = q^{1/24}\prod_{n\geq 1} (1-\nobreak q^n)$,
$q=e^{2\pi i \tau}$, is the Dedekind eta-function.

We take $\ell=1$, with $w_1=\frac N2-2$ and multiplier system $v_{N/2}$. The
ring $\O(\A)$ is a \emph{commutative} ring of formal power series in one
variable $A$. The coefficient of $  A^ m  $ is in the $\Gm(1)$-module
$V\bigl(v_{mN/2}, \frac m2N -\nobreak 2m\bigr)$.

If we take $1\leq N \leq 24$, then with $\ff=(\eta^N)$ we get
in~\eqref{Jdef}
\be\label{JN} J(\ff;y,x;t)
\= 1+\sum_{n\geq 1} R_n\bigl((\eta^N)^{\times n} ;y,x;t)
A^n\,, \ee
where $(\eta^N)^{\times n}$ means a sequence of $n$ copies of $\eta^N$.

If $N>24$ we still  
can work with   $\ff=(f)$, but now $f$ need not be a multiple of
$\eta^N$.

\section{From cusp forms to noncommutative cohomology}\label{sect-ncc}

Manin uses relation \eqref{mp-mult} to associate a noncommutative cocycle to
the vector $\ff=(f_1,\ldots,f_\ell)$ of cusp forms. We first reformulate
Manin's description \cite[\S1]{Ma5} of noncommutative cohomology for a
right action, and then determine the map from vectors of cusp forms to
noncommutative cohomology.

\rmrk{Noncommutative cohomology} Let $G$ and $N$ be groups, written
multiplicatively, and suppose that for each $g\in G$ there is an
automorphism $n\mapsto n|g$ of $N$ such that the map $g\mapsto |g$ is an
antihomomorphism from $G$ to the automorphism group $\mathrm{Aut}(N)$, ie.,
$n|(gh) = \bigl(n|g)|h$ for $n\in N$ and $g,h\in G$.

A map $ \rho  :G \rightarrow  N $ is called a $1$-cocycle if it satisfies
\be \rho_{gh} \=(\rho_g|h)\, \rho_h
\quad\text{ for all }g,h\in G\,.\ee
The set of such cocycles is called $Z^1(G;N)$. It is not a group.
Nevertheless it contains the special element $1:g\mapsto 1$.

The group $N$ acts on $Z^1(G;N)$ from the left, by $ \rho \mapsto{}^n \rho$
defined by
\be\label{nd} {}^n \rho_g = (n|g)\,\rho_g\, n^{-1}\,. \ee
The cohomology set $H^1(G;N)$ is the set of $N$-orbits in $Z^1(G;N)$ for
this action. The orbit of the cocycle $g\mapsto 1$ is called the set of
coboundaries $B^1(G;N)$.

\rmrk{Noncommutative cocycles attached to a sequence of cusp forms} As the
group $N$ we use the subgroup $\N(\A)$ of the group of those units in
$\O(\A)^\ast$ that have constant term equal to~$1$. The series
$J(\ff; y,x ;\cdot)$ in \eqref{Jdef} is an element of~$\N(\A)$.

Following Manin we define for $\ff=(f_1,\ldots,f_\ell) \in \cf\A(\Gm)$ and
$x\in \uhp^\ast$
\be \Ps(\ff)^x_\gm (t)= J(\ff;\gm^{-1}x,x;t)
\,.\ee
The properties \eqref{Jinv} and \eqref{mp-mult} imply that this defines a
noncommutative cocycle $ \Ps(\ff)^x  \in Z^1\bigl( \Gm;\N(\A)\bigr)$, and
that its cohomology class $\Coh_\A(\ff)\in H^1\bigl(\Gm;\N(\A)\bigr)$ does
not depend on the choice of the base-point~$x$. We write
$\Ps(\ff)=\Ps(\ff)^\infty$.

\begin{prop}\label{prop-inj}The map
\be \Coh_\A: \cf\A(\Gm) \rightarrow H^1\bigl(\Gm;\N(\A)\bigr)\ee
is injective.
\end{prop}
\begin{proof}
Suppose that the cocycles $\Ps(f_1,\ldots,f_\ell)$ and
$\Ps(f_1',\ldots,f_\ell')$ are in the same cohomology class. Then there is
$n\in \N(\A)$ such that for all $\gm\in \Gm$
\be \label{ane} \Ps(f_1',\ldots,f_\ell')_\gm = (n|\gm)\,
\Ps(f_1,\ldots,f_\ell)_\gm\, n^{-1}\,.\ee
We denote the coefficient of $A_j$ in $n$ by $n_j\in V(v_j,w_j)$. In
relation \eqref{ane} we consider only the constant term and the term with
$A_j$, and work modulo all other terms:
\[ 1 - \ps_{f_j',\gm}\, A_j \;\equiv\; \bigl(1+n_j\, A_j\bigr)\; \bigl(
1-\ps_{f_j,\gm}\, A_j \bigr)\, \bigl( 1- n_j\, A_j\bigr)\,, \]
Taking the factor of $A_j$ gives the following:
\[
- \ps_{f_j',\gm} \= n_j|_{ v_j,-w_j }\gm - \ps_{f_j,\gm} - n_j\,. \]
In other words, $\ps_{f_j'}$ and $\ps_{f_j}$ differ by a coboundary. We have
used the noncommutative relation \eqref{ane} in $\N(\A)$ to get a
commutative relation in $V(v_j,w_j)$.

By the Theorem of Knopp and Mawi (Theorem~\ref{thm-KM}) we conclude that
$f_j'=f_j$ for all $j$. Hence $\Coh_\A$ is injective.
\end{proof}
\rmrks \itmi Implicit in the proof is the quotient of $\A$ by the ideal
generated by all monomials in the $A_j$ with degree $2$. The corresponding
quotient of $\N(\A)$ is isomorphic to the direct sum of  the $V(v_j, w_j)$. 
  \itm The injectivity of the map from cusp forms to cocycles is a point in
common   for  this result,   the theorem of Knopp and Mawi, and the
classical Eichler-Shimura result. The bijectivity in the theorem of Knopp
and Mawi is not shared by the classical result, where conjugates of
cocycles also determine cohomology classes. In the next section we will see
that the whole group $H^1\bigl( \Gm;\N(\A) \bigr)$ can be described with
cusp forms, but in a more complicated way than by the map~$\Coh_\A$.

\subsection{Commutative example}\label{sect-cePsi}In~\S\ref{sect-ceJ}
we considered the case that $\ell=1$. Then $\N(\A)$ is a commutative group,
and $H^1\bigl(\Gm;\N(\A)\bigr)$ is a cohomology \emph{group}.

When $\Gm=\Gm(1)$, with the choices and notations indicated in
\S\ref{sect-ceJ}, the cocycle $\Ps(\eta^N)$ vanishes on $\matc 1101$
(hence may be called a parabolic cocycle), and is determined by its value on
$S=\matr0{-1}10$:
\be\label{Ps-mod} \Ps(\eta^N)_S(t) \= J(\eta^N;0,\infty;t)
\= 1+\sum_{n\geq 1} R_n\bigl( (\eta^N)^{\times n};0,\infty;t\bigr)\, A^n\,.
\ee
The coefficient of $A^n$ is an iterated period integral of $\eta^N$. The
cocycle satisfies the well known relations
$\bigr(\Ps(\eta^N)_S|S\bigr) \Ps(\eta^N)_S \= 1$ and
$\Ps(\eta^N)_S = \Ps (\eta^N)_S|T' \; \Ps(\eta^N)_S |T $, with
$T'=\matc1011=TST$.

\section{Noncommutative cocycles and collections of cusp
forms}\label{sect-scf}
The proof of proposition~\ref{prop-inj} is based on the fact that the vector
of cusp forms $\ff$ can be recovered from the terms of degree $1$ in the
formal series $J(\ff;\gm^{-1}\infty,\infty;t)$. In this section we
associate to collections of cusp forms noncommutative cocycles of a more
general nature.

We keep fixed the choice $\A$ of positive weights $w_1+2,\ldots, w_\ell+2$
and corresponding multiplier systems $v_1,\ldots, v_\ell$. To each monomial
$B=A_{m_1}\cdots A_{m_d}$ in $\O(\A)$ we associate the   shifted  weight
$\w(B):=\w(m)$ and the multiplier system $\v(B):=\v(m)$ as defined
in~\eqref{vvww} for $m=(m_1,\ldots,m_d)\in \{1\ldots,\ell\}^d$. So $\w(B)$
and $\v(B)$ depend only on the factors $A_{m_i}$ occurring in $B$, not on
their order.

\begin{defn}\label{cfA} We define the \emph{degree} $d(B)$ of the monomial
$B=A_{m_1}\cdots A_{m_d}$ as the number $d$ of factors $A_j$
($1\leq j \leq \ell$) occurring in it.

Let $\B(\A)$ be the set of all monomials $B$ in $A_1,\ldots, A_\ell$ with
$d(B)\geq 1$ for which $\cf{\w(B)+2}\bigl( \Gm,\v(B)\bigr)\neq \{0\}$. We
put
\be\label{cfAdef} \cf{}(\A;\Gm) \;:=\; \ \prod_{B \in \B(\A)}
\cf{\w(B)+2}\bigl(\Gm,\v(B)\bigr)\,.\ee
\end{defn}

\rmrks \itmi The space of cusp forms $\cf{\w(B)+2}\bigl(\Gm,\v(B)\bigr)$ may
be zero. In fact, this is necessarily the case if $\w(B)\leq -2$. For
$\w(B)>-2$ it may also happen to be zero, depending on $\Gm$ and~$\v(B)$.

\itm The set $\B(\A)$ is often infinite. We recall that elements of infinite
direct sums of vector spaces have zero components at all but finitely many
$B\in \B(\A)$. Here we use the product. Its elements may have non-zero
components for all~$B$.

\itm We denote elements of $\cf{}(\A;\Gm)$ by $\h$, with component $\h(B)$
in the factor corresponding to the monomial~$B$.

\itm There may be more than one monomial $B$ for which
$\cf{\w(B)+2}\bigl(\Gm,\v(B)\bigr)$ is equal to a given space of cusp
forms. See \ref{iN4} below for an example where this happens for infinitely
many monomials.

\itm\label{ff} The space $\cf\A(\Gm)= \prod_{j=1}^\ell \cf{w_j+2}(\Gm,v_j)$
in~\eqref{SA} may be considered as a subspace of $\cf{}(\A;\Gm)$. To do
this we define for a given $\ff=(f_1,\ldots,f_\ell) \in \cf\A(\Gm)$ the
 element $\h \in \cf{}(\A;\Gm)$ by
\[ \h(A_j)\=f_j\,, \qquad \h(B) \=0\quad\text{ if } d(B)\geq 2\,.\]

\itm In the \emph{commutative case} $\ell=1$ we have
$\B(\A) \subset \bigl\{ A^n \;:\; n \in \ZZ_{\geq 1}\bigr\}$. We consider
three specializations of the example in~\S\ref{sect-ceJ}.
\begin{enumerate}
\item[(f1)]\label{iN24} Take $N=24$. So $
\B(\A)=\bigl\{ A^n \;:\; n \in \ZZ_{\geq 1}\bigr\}, \w(B)=10n $ and
 $\v(B)= v_{12}=v_0=1$. Hence
\be\label{N24} \cf{}\bigl(\A;\Gm(1)\bigr) \= \prod_{n\geq 1}
\cf{10n+2}\bigl(\Gm(1),1\bigr)\,. \ee

\item[(f2)]\label{iN4} Take $N=4$. So $\w(A^n)=0$ for all $n\geq 1$ and the
space $\cf2\bigl(\Gm(1),v_{2n}\bigr) $ is equal to $\CC\, \eta^4$ if
$n\equiv 1 \bmod 6$ and zero otherwise. This implies that
$\B(\A)= \bigl\{A^n\geq 1\;:\; n \equiv 1\bmod 6\bigr\}$. Since
$\v(B)=v_n=v_2$ for $n\equiv 1 \bmod 6$, we obtain
\be\label{N4} \cf{}\bigl(\A;\Gm(1)\bigr) \= \prod_{n\geq 1,\; n\equiv 1
\bmod 6} \cf2\bigl( \Gm(1), v_2 \bigr) \,.\ee
\item[(f3)] \label{iN1} Take $N=1$. So $\w(B)=w_1 = -\frac 32$, and
$nw_1<-2$ for $n\geq 2$. Hence $\B(\A) = \{ A\}$, $\v(B)=v_{1/2}$, and
\be\label{N1}
 \cf{}\bigl(\A;\Gm(1)\bigr) \= \cf{1/2}\bigl(\Gm(1),v_{1/2}\bigr)
 \= \CC\,\eta \,,\ee
with $\eta(\tau) = e^{\pi i \tau/12}\prod_{n\geq 1}(1-e^{2\pi i n \tau})$.
\end{enumerate}

\begin{lem}For each $\h \in \cf{}(\A;\Gm)$ the following series converges
and defines an element of $\N(\A)$:
\badl{BFdef} J(\h;&y,x;t) \;:=\; 1 
\\
&\hbox{} + \sum_{n\geq 1}\, \sum_{B_1,\ldots,B_n\in \B(\A)}
R_n\bigl(\h(B_1),\h(B_2),\ldots,\h(B_n);y,x;t\bigr)\, B_1B_1\cdots
B_n\,. \eadl
\end{lem}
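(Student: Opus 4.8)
The plan is to verify two things: that $J(\h;y,x;t)$ is a well-defined element of the ring $\O(\A)$, and that it lies in the subgroup $\N(\A)$. Since membership in $\N(\A)$ asks only for a unit of $\O(\A)$ with constant term $1$, and the constant term of the series is manifestly $1$, the second point will follow from the first together with the standard fact that a formal power series whose constant term is an invertible scalar is invertible in $\O(\A)$ (invert by the geometric series $1-u+u^2-\cdots$ applied to the part $u$ of positive degree). So the real content is to show that $J(\h;y,x;t)\in\O(\A)$, which means checking that the coefficient of each monomial of $\O(\A)$ is a well-defined, hence finite, sum lying in the prescribed coefficient space.

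First I would establish convergence in the formal-power-series topology, that is, finiteness of each monomial coefficient. The key observation is a degree count: in $\O(\A)$ the product $B_1B_2\cdots B_n$ is again a monomial, of degree $d(B_1)+\cdots+d(B_n)$, and since every $B_i\in\B(\A)$ satisfies $d(B_i)\geq 1$ this degree is at least $n$. Hence, fixing a monomial $M=A_{p_1}\cdots A_{p_D}$ of degree $D$, any term $R_n(\h(B_1),\ldots,\h(B_n);y,x;t)\,B_1\cdots B_n$ contributing to the coefficient of $M$ must have $n\leq D$ and must come from a factorization of the word $A_{p_1}\cdots A_{p_D}$ into $n$ nonempty consecutive blocks $B_1,\ldots,B_n$. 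There are at most $2^{D-1}$ such factorizations, so only finitely many terms contribute to the coefficient of $M$, no matter that $\B(\A)$ itself may be infinite. This is precisely convergence of \eqref{BFdef} in $\O(\A)$.

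Next I would check that each such coefficient lands in the correct space. Each term $R_n(\h(B_1),\ldots,\h(B_n);y,x;\cdot)$ is the iterated integral of the $n$ cusp forms $\h(B_i)\in\cf{\w(B_i)+2}(\Gm,\v(B_i))$, so by Lemma~\ref{lem-pg} it defines an element of $V(\tilde\v,\tilde\w)$ with $\tilde\v=\v(B_1)\cdots\v(B_n)$ and $\tilde\w=\w(B_1)+\cdots+\w(B_n)$. Because $\w$ and $\v$ depend only on the letters occurring and not on their order or grouping (see \eqref{vvww}), for every factorization $M=B_1\cdots B_n$ one has $\tilde\w=\w(M)$ and $\tilde\v=\v(M)$. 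Thus all finitely many terms contributing to the coefficient of $M$ lie in the single space $V(\v(M),\w(M))$, and hence so does their sum. This confirms $J(\h;y,x;t)\in\O(\A)$; combined with the first paragraph it gives $J(\h;y,x;t)\in\N(\A)$.

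The argument is essentially bookkeeping, and I do not expect a genuine obstacle; the one point requiring care is that $\B(\A)$ can be infinite, so finiteness of the monomial coefficients is not automatic. It is rescued entirely by the inequality $d(B_1)+\cdots+d(B_n)\geq n$, which simultaneously bounds the length $n$ of the iterated integrals and the number of admissible factorizations of a fixed target monomial; everything else reduces to Lemma~\ref{lem-pg} and the additivity of $\w$ together with the multiplicativity of $\v$.
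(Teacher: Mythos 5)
Your proof is correct and follows essentially the same route as the paper: the key point in both is that $d(B_1\cdots B_n)\geq n$, which forces $n\leq D$ and confines the $B_i$ to finitely many possibilities for each target degree $D$, so every coefficient is a finite sum, and the constant term is $1$. Your extra details (counting factorizations of a fixed monomial, checking that all contributions to the coefficient of $M$ lie in $V(\v(M),\w(M))$, and inverting via the geometric series) merely make explicit what the paper leaves implicit.
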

\begin{proof}The degree of $B_1B_2\cdots B_n$ is at least $n$. For
convergence in $\O(\A)$ there should be for each $D\geq 0$ only finitely
many terms with degree at most $D$. This restricts $n$ to $n\leq D$, and
the $B_j$ to monomials of degree bounded by $D$, of which there are only
finitely many.

The terms with $n\geq 1$ cannot contribute to the constant term, hence we
obtain an element of $\N(\A)$.
\end{proof}

\rmrks\itmi If $h(B_i)=0$ for some $i$ in the iterated integral in
\eqref{BFdef}, then the integral vanishes.  
In \eqref{BFdef}  
 we could have   restricted the $B_i$ in 
the sum by   the 
condition $\h(B_i)\neq 0$. In particular,
$J(0;y;x;t)= 1  $.

\itm Definition \eqref{BFdef} extends definition \eqref{Jdef}. If
$\ff \in \cf\A(\Gm)$ is considered as an element $\h\in\cf{}(\A;\Gm)$, as
in remark \ref{ff} to Definition~\ref{cfA}, then
\be J(\ff;y,x;t) \= J(\h;y,x;t)\,. \ee

\begin{prop}\label{prop-Jprop}For all $\h\in \cf{}(\A;\Gm)$, $\gm\in \Gm$,
$z,y,x\in \uhp^\ast$:
\begin{align}
\label{BJ-gm}J(\h;\gm^{-1}y,\gm^{-1}x;\cdot)&\=
J(\h;y,x;\cdot)|\gm\,,\\
\label{BJ-1}J(\h;x,x;t)&\=1\,,\\
\label{BJ-fe}J(\h;z,x;t)&\= J(\h;z,y;t)\, J(\h;y,x;t)\,,\\
\label{BJ-i}J(\h;x,y;t)&\= J(\h;y,x;t)^{-1}\,.
\end{align}
\end{prop}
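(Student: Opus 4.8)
The plan is to deduce all four identities from three ingredients that are already available: Lemma~\ref{lem-inv} for the $\Gm$-equivariance \eqref{BJ-gm}, the trivial relation \eqref{trivrel} for the normalization \eqref{BJ-1}, and a general concatenation (path-composition) formula for the iterated integrals $R_n$ for the functional equation \eqref{BJ-fe}; the inversion \eqref{BJ-i} is then a formal consequence of \eqref{BJ-1} and \eqref{BJ-fe}. These are the same three mechanisms that produced \eqref{Jinv} and \eqref{mp-id}--\eqref{mp-mult} for the series $J(\ff;\cdot)$; the only new feature is that the coefficients of $J(\h;\cdot)$ are indexed by factorizations $C=B_1\cdots B_n$ of a monomial $C$ into members of $\B(\A)$, so the bookkeeping must be carried out one monomial at a time.

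For \eqref{BJ-gm} I would argue coefficient by coefficient. The coefficient of a monomial $B_1\cdots B_n$ in $J(\h;y,x;\cdot)$ is the iterated integral $R_n(\h(B_1),\ldots,\h(B_n);y,x;\cdot)$, whose arguments are the cusp forms $\h(B_i)\in\cf{\w(B_i)+2}(\Gm,\v(B_i))$. By \eqref{vvww} its combined weight is $\w(B_1)+\cdots+\w(B_n)=\w(B_1\cdots B_n)$ and its combined multiplier is $\v(B_1)\cdots\v(B_n)=\v(B_1\cdots B_n)$, which is exactly the weight and multiplier governing the action $|\gm$ on the coefficient of that monomial in $\O(\A)$. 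Lemma~\ref{lem-inv} applied to this tuple then gives the required transformation term by term, which is \eqref{BJ-gm}. The normalization \eqref{BJ-1} is immediate: by \eqref{trivrel} every coefficient with $n\ge 1$ vanishes when $y=x$, leaving only the constant term $1$.

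The heart of the matter is \eqref{BJ-fe}, for which I would first establish the concatenation formula
\[
R_n(g_1,\ldots,g_n;z,x;t)\=\sum_{k=0}^{n} R_k(g_1,\ldots,g_k;z,y;t)\,R_{n-k}(g_{k+1},\ldots,g_n;y,x;t),
\]
with $R_0\=1$, for any cusp forms $g_1,\ldots,g_n$. This is proved by splitting the outermost integral over $\tau_1$ at the point $y$ and inducting on $n$, exactly as the low-order instances \eqref{rel2} and \eqref{rel3} arise; when $y$ is a cusp one must use the convention of approaching cusps along a fixed geodesic so that the path from $x$ to $z$ really is the concatenation of the paths $x\to y$ and $y\to z$. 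Granting the formula, I would compare the coefficient of a fixed monomial $C$ on the two sides of \eqref{BJ-fe}. On the right, the coefficient of $C$ is $\sum_{C=C'C''}\bigl([C']J(\h;z,y;\cdot)\bigr)\bigl([C'']J(\h;y,x;\cdot)\bigr)$; expanding each factor as a sum over $\B(\A)$-factorizations and recombining, the pairs consisting of a factorization $C=B_1\cdots B_n$ together with a cut point $k$ are in bijection with the data of a splitting $C=C'C''$ and factorizations of $C'$ and $C''$. The concatenation formula then collapses the sum over cut points $k$ into the single integral $R_n(\h(B_1),\ldots,\h(B_n);z,x;t)$, whose sum over factorizations of $C$ is precisely the coefficient of $C$ in $J(\h;z,x;\cdot)$.

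Finally, \eqref{BJ-i} follows by putting $z=x$ in \eqref{BJ-fe} and invoking \eqref{BJ-1}: this gives $1=J(\h;x,y;\cdot)\,J(\h;y,x;\cdot)$, and since $J(\h;y,x;\cdot)$ lies in $\N(\A)$ (constant term $1$) it is invertible, so $J(\h;x,y;\cdot)=J(\h;y,x;\cdot)^{-1}$. The main obstacle I anticipate is the concatenation formula at general order $n$: the induction is routine in spirit, but the combinatorial identification of the two sets of factorizations, together with the convergence subtlety when $y$ is a cusp, is where the care is needed. An alternative organization would be to exhibit a continuous, $\Gm$-equivariant ring homomorphism $\O(\tilde\A)\to\O(\A)$ sending a fresh variable $\tilde A_B$ to the monomial $B$ for each $B\in\B(\A)$, under which $J(\h;\cdot)$ is the image of the ordinary Manin series of the tuple $(\h(B))_{B\in\B(\A)}$, and then to transport \eqref{Jinv} and \eqref{mp-id}--\eqref{mp-mult}; this trades the combinatorics for the bookkeeping of infinitely many variables.
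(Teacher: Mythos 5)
Your proposal is correct, and your handling of \eqref{BJ-gm}, \eqref{BJ-1} and \eqref{BJ-i} coincides with the paper's; but for the central relation \eqref{BJ-fe} you take a genuinely different route. The paper follows Manin's differential-equation argument: it introduces the $\O(\A)$-valued form $\Om(\h;z;t)$ of Lemma~\ref{lem-diff}, proves $d_z\,J(\h;z,x;t)=\Om(\h;z;t)\,J(\h;z,x;t)$ together with the companion formula for the inverse (Lemma~\ref{lem-di}), deduces that $K(z)=J(\h;z,y;t)^{-1}J(\h;z,x;t)$ satisfies $d_zK=0$, hence is constant and equal to $K(y)=J(\h;y,x;t)$, and finally extends from $z,y,x\in\uhp$ to $\uhp^\ast$ by continuity of the coefficients. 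You instead prove the order-$n$ path-composition formula $R_n(g_1,\ldots,g_n;z,x;t)\=\sum_{k=0}^{n} R_k(g_1,\ldots,g_k;z,y;t)\,R_{n-k}(g_{k+1},\ldots,g_n;y,x;t)$ by splitting the outermost integral at $y$ and inducting on $n$, and then match coefficients of each fixed monomial $C$ via the bijection between pairs (a factorization $C=B_1\cdots B_n$ with $B_i\in\B(\A)$, a cut point $k$) and triples (a splitting $C=C'C''$, a factorization of $C'$, a factorization of $C''$). This is sound: monomials in $\O(\A)$ are words in a free monoid, so deconcatenation is unambiguous, and for a fixed $C$ all sums involved are finite, so no convergence issues arise. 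What the paper's argument buys is economy: holomorphy and the product rule treat all orders simultaneously, and the cusp case is settled by a single continuity remark (which you could also invoke, instead of re-examining the geodesic conventions at cusps). What your argument buys is explicitness: it exhibits the complete system of decomposition relations of which \eqref{rel2} and \eqref{rel3} are the first instances, making visible exactly which relations among multiple period integrals the identity \eqref{BJ-fe} encodes. One caution: your concatenation formula is the one consistent with \eqref{mp-mult} — comparing coefficients of $A_1A_2$ there yields $R_2(f_1,f_2;z,y;t)+R_2(f_1,f_2;y,x;t)-R_2(f_1,f_2;z,x;t)\=-R_1(f_1;z,y;t)\,R_1(f_2;y,x;t)$ — so do not be troubled that the right-hand side of \eqref{rel2} as printed carries the opposite sign; the discrepancy lies in that display, not in your formula, and it is \eqref{mp-mult} that your coefficient comparison must (and does) reproduce.
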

\begin{proof}
The relations \eqref{BJ-gm} and \eqref{BJ-1} follow directly from
Lemma~\ref{lem-inv} and \eqref{Rdef1}. We will prove relation \eqref{BJ-fe}
in a sequence of lemmas, and finally will derive relation \eqref{BJ-i} from
relation~\eqref{BJ-fe}.

\rmrk{Relation \eqref{BJ-fe}} This relation holds in a general context of
iterated integrals; automorphic properties are not needed. Our proof
follows Manin closely, \cite[Proposition 1.2]{Ma6}. We first show relation
\eqref{BJ-fe} for $x,y,z\in \uhp$.

\begin{lem}\label{lem-diff}For $\h \in \cf{}(\A;\Gm)$ put
\be \label{Omdef}
 \Om(\h;z;t) \;:=\; \sum_{B\in \B(\A)}
(z-t)^{\w(B)}\, \h(B;z)\, dz\cdot B \,.\ee
This formal series of $\O(\A)$-valued differential forms converges, and for
$z\in \uhp$
\be\label{diff} d_z \, J(\h;z,x;t) \= \Om(\h;z;t)\, J(\h;z,x;t)\,.\ee
\end{lem}
\begin{proof} The sum in \eqref{Omdef} is infinite in most cases. The
convergence follows from the fact that the number of monomials with a given
degree is finite. The differential of a non-constant term in \eqref{BFdef}
is given by
\begin{align*}
d_z & R_n\bigl(\h(B_1),\h(B_2),\ldots,\h(B_n);z,x;t\bigr)\, B_1B_2 \cdots
B_n
\\
&\= \h(B_1;z)\, (z-t)^{\w(B_1)}\, B_1 \, R_{n-1}(\h(B_2),\ldots,
 \h(B_n);z,x;t)\, B_2 \cdots
 B_n\,.
\end{align*}
With a renumbering in the summation this gives~\eqref{diff}.
\end{proof}

\begin{lem}\label{lem-di}
$d_z\, J(\h;z,x;t)^{-1} \= - J(\h;z,x;t)^{-1}\,\Om(\h;z;t)$.
\end{lem}
\begin{proof} The inverse is defined by the relation
\[ 1 \= J(\h;z,x;t)^{-1}\, J(\h;z,x;t)\,. \]
Taking the differential of both sides gives with~\eqref{diff}
\[ 0\= \bigl(d_z J(\h;z,x;t)^{-1}\bigr) \, J(\h;z,x;t) + J(\h;z,x;t)^{-1}\,
\Om(\h;z;t)\, J(\h;z,x;t)\,. \]
Right multiplication by $J(\h;z,x;t)$ gives the relation in the lemma.
\end{proof}

\begin{lem}For fixed $x$ and $y$ put
$K(z) = J(\h;z,y;t)^{-1}\, J(\h;z,x;t)$. Then $K(z) = J(\h;y,x;t)$.
\end{lem}
\begin{proof}
With Lemmas \ref{lem-diff} and~\ref{lem-di} we find
\begin{align*}
 d_z K(z) &\= - J(\h;z,y;t)^{-1}\, \Om(\h;z;t)\, J(\h;z,x;t)\\
 &\qquad\hbox{}
  + J(\h;z,y;t)^{-1}\, \Om(\h;z;t)\, J(\h;z,x;t)\=0\,.
\end{align*}
Hence $K(z)$ is constant. Its value is
\[ K(y) \=J(\h;y,y;t)^{-1}\,J(\h(y,x;t) \= J(\h;y,x;t)\,, \]
by~\eqref{BJ-1}.
\end{proof}

\rmrk{Completion of the proof of relation \eqref{BJ-fe}} The relation
$K(z)= J(\h;y,x;t)=J(\h;z,y;t)^{-1}\, J(\h;z,x;t)$ implies the desired
result for $z,y,x\in \uhp$. By continuity it holds for
$z,y,x\in \uhp^\ast$.

\rmrk{Relation \ref{BJ-i}} This relation follows from \eqref{BJ-1}
and~\eqref{BJ-fe}. This ends the proof of Proposition~\ref{prop-Jprop}.
\end{proof}

\rmrk{Noncommutative cocycle}From \eqref{BJ-gm} and \eqref{BJ-fe} it follows
that for any $\h \in \cf{}(\A;\Gm)$
\be \label{PsiBdef}
\Psi(\h)_\gm \;:=\; J(\h;\gm^{-1}\infty,\infty;t)
\ee
defines a cocycle $\gamma \rightarrow \Psi(\h)_{\gamma}   $ in
$ Z^1\bigl( \Gm;\N(\A)\bigr)$.

If we replace $\infty$ in \eqref{PsiBdef} by another base-point
$x\in \uhp^\ast$ we get a cocycle in the same cohomology class. So
$\h \mapsto \Psi(\h)$ induces a map from $\cf{}(\A;\Gm)$ to the
noncommutative cohomology set $H^1\bigl(\Gm;\N(\A)\bigr)$, extending the
map $\Coh_\A$ in Proposition~\ref{prop-inj}.

The main result of this paper is the bijectivity of this map:
\begin{thm}\label{thm-surj}Let $\A$ denote the choice of finitely many
positive weights $w_1+\nobreak 2, \allowbreak w_2+\nobreak 2,
\allowbreak \ldots,w_\ell+2$ and corresponding multiplier systems
$v_1,\ldots,v_\ell$ of~$\Gm$.

For each noncommutative cohomology class  $c\in H^1 \bigl(\Gm; N(\A)\bigr)$  
there is a unique element $\h \in \cf{}(\A;\Gm)$ such that
$\Psi(\h) \in c$.
\end{thm}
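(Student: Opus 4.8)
The plan is to exploit the degree filtration of $\N(\A)$ and reduce both the existence and the uniqueness assertions to the abelian theorem of Knopp and Mawi (Theorem~\ref{thm-KM}) applied at each graded level. For $d\ge 1$ let $\N(\A)_{\ge d}$ be the subgroup of elements $1+\sum_{d(B)\ge d} c_B\,B$ of $\N(\A)$, so that $\N(\A)=\N(\A)_{\ge 1}\supset \N(\A)_{\ge 2}\supset\cdots$, $\bigcap_d \N(\A)_{\ge d}=\{1\}$, and $\N(\A)$ is complete for this filtration. Since multiplying series of degrees $\ge a$ and $\ge b$ produces terms of degree $\ge a+b$, one checks $[\N(\A)_{\ge a},\N(\A)_{\ge b}]\subseteq \N(\A)_{\ge a+b}$; in particular the filtration is central, and for each $d$ the quotient $\N(\A)_{\ge d}/\N(\A)_{\ge d+1}$ is the abelian $\Gm$-module $\bigoplus_{d(B)=d} V\bigl(\v(B),\w(B)\bigr)$, the group law being addition of the degree-$d$ coefficients and the action being $|_{\v(B),-\w(B)}$ on each. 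As the number of monomials $B$ of a given degree is finite, Theorem~\ref{thm-KM} gives
\[ H^1\bigl(\Gm;\N(\A)_{\ge d}/\N(\A)_{\ge d+1}\bigr)\;\cong\;\bigoplus_{d(B)=d} \cf{\w(B)+2}\bigl(\Gm,\v(B)\bigr), \]
where the summands with $\cf{\w(B)+2}(\Gm,\v(B))=\{0\}$ contribute $0$. Thus only $B\in\B(\A)$ survive, and the right-hand side is exactly the degree-$d$ part of $\cf{}(\A;\Gm)$ in Definition~\ref{cfA}. This is the matching that makes bijectivity, rather than mere injectivity, possible.

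For existence I would build $\h$ one degree at a time by successive approximation. Fix a cocycle $c\in Z^1(\Gm;\N(\A))$ representing the given class. Suppose inductively that the components $\h(B)$ with $d(B)<d$ have been chosen and that, after replacing $c$ by a cohomologous representative, $c_\gm\equiv\Psi(\h)_\gm\pmod{\N(\A)_{\ge d}}$ for all $\gm$, where $\h$ is the truncation built so far. The essential computation is that enlarging $\h$ by components in degree $d$ changes $\Psi(\h)$ only in degrees $\ge d$, and that its degree-$d$ part is augmented by exactly $\sum_{d(B)=d}R_1\bigl(\h(B);\gm^{-1}\infty,\infty;t\bigr)\,B=-\sum_{d(B)=d}\ps_{\h(B),\gm}\,B$ by~\eqref{o1coc}; indeed the only terms of~\eqref{BFdef} of total degree $d$ that can involve a degree-$d$ factor are the $n=1$ terms. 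Hence the $\N(\A)_{\ge d}/\N(\A)_{\ge d+1}$-valued cocycle $\gm\mapsto\Psi(\h)_\gm^{-1}c_\gm$ is, by the graded identification above and Theorem~\ref{thm-KM}, cohomologous to the one produced by a unique choice of degree-$d$ components $\{\h(B):d(B)=d\}$. Adjusting $c$ by a coboundary with conjugator in $\N(\A)_{\ge d}$ then upgrades this to $c_\gm\equiv\Psi(\h)_\gm\pmod{\N(\A)_{\ge d+1}}$ while leaving the lower degrees untouched. The adjustments made at stage $d$ lie in $\N(\A)_{\ge d}$, so their accumulated product converges in the complete group $\N(\A)$; the limit conjugates $c$ into $\Psi(\h)$ with $\h=\lim_d \h_{\le d}\in\cf{}(\A;\Gm)$.

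For uniqueness I would argue by induction on degree that cohomologous images $\Psi(\h)\sim\Psi(\h')$ force $\h=\h'$. The constant and degree-$1$ terms are handled exactly as in Proposition~\ref{prop-inj}: the degree-$1$ parts of $\h$ and $\h'$ agree by the injectivity half of Theorem~\ref{thm-KM}. Assuming $\h(B)=\h'(B)$ for all $d(B)<d$, the cocycles $\Psi(\h)$ and $\Psi(\h')$ coincide through degree $d-1$, so $\gm\mapsto\Psi(\h)_\gm^{-1}\Psi(\h')_\gm$ takes values in $\N(\A)_{\ge d}$ and reduces modulo $\N(\A)_{\ge d+1}$ to an abelian cocycle whose class, under the identification above, is the collection of differences $\{\h(B)-\h'(B):d(B)=d\}$. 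Cohomologousness of $\Psi(\h)$ and $\Psi(\h')$ must force this class to vanish, and then the injectivity of Knopp--Mawi gives $\h(B)=\h'(B)$ in degree $d$, completing the induction.

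The hard part is exactly the passage from the associated graded to the group itself in this non-abelian setting, at two points. First, in existence one must justify that the infinite sequence of coboundary adjustments really converges and that conjugation is continuous in the $\N(\A)_{\ge d}$-adic topology; this uses completeness of $\O(\A)$ together with the fact that $\Gm$ is finitely generated, so that each cocycle is determined by its values on a finite generating set. Second, and more delicate, in uniqueness the conjugating element $n$ with $\Psi(\h')={}^n\Psi(\h)$ mixes all degrees, so one cannot simply assume $n\in\N(\A)_{\ge d}$; the ambiguity is governed by the stabilizer of the cocycle under the $\N(\A)$-action, equivalently by the connecting map issuing from the $\Gm$-invariants $H^0\bigl(\Gm;\N(\A)/\N(\A)_{\ge d}\bigr)$ into the $H^1$ of the graded piece. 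The crux is to see that this ambiguity does not obstruct reading off the degree-$d$ class, which reduces to the vanishing of invariant holomorphic functions of the negative weights $-\w(B)$ with $\w(B)>0$ that occur, in harmony with the $\{0\}$ appearing on the cusp-form side of Theorem~\ref{thm-KM} for nonpositive weight.
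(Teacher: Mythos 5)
Your existence argument is, up to notation, the paper's own proof: the induction over the degree filtration, the identification of the graded quotients $\N(\A)_{\ge d}/\N(\A)_{\ge d+1}$ with $\bigoplus_{d(B)=d}V\bigl(\v(B),\w(B)\bigr)$, the application of Theorem~\ref{thm-KM} to the lowest-degree discrepancy, the correction of the cocycle by a conjugator $1-\sum_n a_n C_n\in\N(\A)_{\ge d}$ as in \eqref{Hk}--\eqref{XK1d}, the observation that only the $n=1$ terms of \eqref{BFdef} contribute new terms in the critical degree (the paper's \eqref{kk1}), and the convergence of the accumulated conjugators by completeness. (The appeal to finite generation of $\Gm$ is not needed for this convergence; completeness of $\O(\A)$ suffices.)

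The uniqueness half is where you depart from the paper, and it contains a genuine gap. You correctly isolate the problem: if $\Psi(\h')={}^n\Psi(\h)$, the conjugator $n$ has components in all degrees, and the degree-$d$ comparison picks up cross terms $(n_i|\gm)\,\Psi(\h)_{d-i,\gm}-\Psi(\h)_{d-i,\gm}\,n_i$, $0<i<d$, which are not coboundaries. Your proposed cure is that the lowest nonzero component of $n$ must be a $\Gm$-invariant element of some $V\bigl(\v(B),\w(B)\bigr)$, and that such invariants vanish because the weights $-\w(B)$ ``that occur'' are negative. But the hypothesis of the theorem is only $w_j+2>0$, i.e.\ $w_j>-2$: the numbers $\w(B)=\sum_i w_{m_i}$ can be zero or negative, and the paper's own commutative examples realize this. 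In example (f2) ($N=4$) one has $\w(B)=0$ for every monomial and $\v(A^{6k})=1$, so the nonzero constants are $\Gm$-invariant elements of $V\bigl(\v(A^{6k}),0\bigr)$; in example (f3) ($N=1$) one has $w_1=-\tfrac32$, and a $\Gm$-invariant element of $V\bigl(\v(B),\w(B)\bigr)$ amounts to a holomorphic modular form of \emph{positive} weight $-\w(B)$ with polynomial growth, which certainly can be nonzero. Note also that Theorem~\ref{thm-KM} controls $H^1\bigl(\Gm;V(\v,\w)\bigr)$, not the invariants $V(\v,\w)^\Gm$; the vanishing of $\cf{\w(B)+2}\bigl(\Gm,\v(B)\bigr)$ for $\w(B)+2\le 0$ says nothing about the $H^0$ that governs your stabilizer ambiguity. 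So the twisted stabilizer of $\Psi(\h)$ is not trivial in general, you cannot force $n\in\N(\A)_{\ge d}$, and the step ``cohomologousness must force this class to vanish'' does not close. The paper avoids this entirely by a different uniqueness argument: it reruns the existence induction on the specific representative $\X^0=\Ps(\h')$ and checks that at every stage the lowest-degree discrepancy is exactly $\bar\Y^k_\gm=\sum_C R_1(\h'(C);\gm^{-1}\infty,\infty;t)\,C$, so that in \eqref{phipa} one may take $a_n=0$ and $H_k=1$, and the procedure reproduces $\h'$ without any arbitrary conjugating element ever entering. To repair your version you would have to either prove triviality of the twisted stabilizers under the actual hypotheses (false as stated, by the examples above) or, as the paper does, replace the arbitrary conjugator by the canonical one produced by the existence construction.
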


\begin{proof}
The induction runs over $k\geq 0$. We start with a cocycle
$\X^0 \in Z^1\bigl(\Gm;\N(\A)\bigr)$, and replace it in the course of an
induction procedure by cocycles $\X^1,\X^2,\ldots$ in the same cohomology
class. During the induction we form a sequence
$\h_0, \h_1,\ldots,  \h_k,..$ of elements of $\cf{}(\A;\Gm)$, and a
strictly increasing sequence of integers $c_0,c_1,\ldots$. The connection
between the induction quantities $\X^k$, $\h_k$ and $c_k$ is given by the
requirement that at each stage of the induction the following conditions
hold:
\begin{enumerate}
\item[(H)]\label{h-cond} $\h_k(B) =0$ for all $B$ with $d(B)>c_k$.
\item[(XPs)]\label{diff-cond} If
$ \X^k_\gm -\Ps(\h_k)_\gm \= \sum_B a(\gm,B)\, B, $ where the sum $B$ runs
over all non-com-mu\-ta\-tive polynomials in $A_1,\ldots,A_\ell$, then for
 each $\gm\in \Gm$
\[ a(\gm,B)\=0 \quad\text{for all $B$ with }d(B)\leq k\,. \]
\end{enumerate}

Either at a certain stage $k$ in the induction procedure the process stops,
and we take $\h=\h_k$, or the process goes on indefinitely, in which case
we construct $\h$ as a limit of the~$\h_k$. In both cases we show that
$\Psi(\h) $ is in the cohomology class of the $\X^k$, and that the element
$\h$ is uniquely determined.

\rmrk{Start of the induction}For a given cocycle $\X^0  $ in the cohomology
class $c$ we put $\h_0=0$ and $c_k=0$. Then for all $\gm\in \Gm$
\[ \X^0_\gm-\Ps(\h_0)_\gm \= \X^0_\gm-1\]
has no constant term, and Conditions (H) and (XPs) are trivially satisfied.

\rmrkn{Has the end of the induction process been reached?}If
$\X^k = \Psi(\h_k)$ we have found a description of the class $c$ as
required in the theorem. This may happen already at the start of the
induction if $\X^0$ is the trivial cocycle $\gm\mapsto 1$.

\rmrk{Induction, choice of $c_{k+1}$}If the process has not ended, then the
difference $\Y^k:=\X^k-\Psi(\h_k)$ determines a non-zero map
$\gm \mapsto \Y^k_\gm$ from $\Gm$ to $\O(\A)$. It is not a cocycle.

We define $c_{k+1}$ as the minimum degree such that $\Y^k_\gm \in \O(\A)$
has non-zero terms of degree $c_{k+1}$ in $A_1,\ldots,A_\ell$ for some
$\gm\in \Gm$. Since Condition (XPs)
holds for $k$ we have $c_{k+1}>c_k$.

\rmrk{Cocycle relation} The cocycle relations for the noncommutative
cocycles $\X^k$ and $\Ps(\h_k)$ give
\begin{align}\nonumber
\Y^k_{\gm\dt} &\= \bigl( (\Y^k_\gm+\Ps(\h_k)_\gm)|\dt \bigr)\; \bigl(
\Y^k_\dt + \Ps(\h_k)_\dt\bigr)
- (\Ps(\h_k)_\gm|\dt) \, \Ps(\h_k)_\dt
\displaybreak[0]\\
\label{Yr}
&\= (\Y^k_\gm|\dt) \, \Ps(\h_k)_\dt + (\Ps(\h_k)_\gm|\dt)\, \Y^k_\dt +
(\Y^k_\gm|\dt)\, \Y^k_\dt\,.
\end{align}
 By Condition (XPs) and the choice of $c_{k+1}$ the element
 $\Y^k_\gm\in \O(\A)$ has no terms with degree less than $c_{k+1}$. We
 denote by $\bar\Y^k_\gm$ the sum of the terms of $\Y^k_\gm$ with exact
 degree $c_{k+1}$. We consider relation \eqref{Yr} modulo terms with degree
 strictly larger than~$c_{k+1}$:
\be \bar \Y^k_{\gm\dt} \;\equiv\;
(\bar \Y^k_\gm|\dt)\, \Ps(\h_k)_\dt + (\Ps(\h_k)_\gm|\dt)\, \bar \Y^k_\dt +
0\,.\ee
In the two products only the constant term $1$ of $\Ps(\h_k)_\gm$ and
$\Ps(\h_k)_\dt$ is relevant, and we obtain
\be\label{bycr} \bar \Y^k_{\gm \dt } \= \bar \Y^k_\gm|\dt + \bar \Y^k_\dt\,.
\ee
So the non-commutative cocycle relations for $\X^k$ and $\Ps(\h_k)$ imply
that $\gm \mapsto \bar\Y^k_\gm$ is a commutative cocycle with values in the
additive group of~$\O(\A)$.

The elements $\bar \Y^k_\gm$ have the form
\be\label{bYk} \bar \Y^k_\gm \= \sum_{n=1}^K \ph^n_\gm \, C_n\,,\qquad
C_n\=A_{p_{n,1}}A_{p_{n,2}}\cdots
A_{p_{n, c_{k+1}}} \ee
with $\ph^n_\gm \in V\bigl( \v(C_n),\w(C_n)\bigr)
$. The $C_n$ have degree $c_{k+1}$ in $A_1,\ldots,A_\ell$. For each $n$
there is some $\gm\in \Gm$ for which $\ph^n_\gm\neq 0$. Relation
\eqref{bycr} implies that each component of $\bar \Y^k$ is a cocycle:
$\ph^n\in Z^1\bigl(\Gm;V(\v(C_n),\w(C_n))\bigr)$. By Theorem~\ref{thm-KM}
there exist $a_n\in V(\v(C_n),\w(C_n))$ and unique cusp forms
 $ g_n \in \cf{\w(C_n)+2}\bigl(\Gm,\v(C_n)\bigr)$ such that for all
 $\gm\in \Gm$
\be\label{phipa} \ph^n_\gm = - \ps_{g_n,\gm} + a_n
|_{\v(C_n),-\w(C_n)}(\gm-1)\,.\ee

\rmrk{Induction, choice of $\X^{k+1}$}Take
\be\label{Hk} H_k \= 1 -\sum_{n=1}^K a_n\, C_n\,. \ee
This is an element of $\N(\A)$. We define the cocycle $\X^{k+1}$ in the same
class as $\X^k$ in the following way:
\be\label{XK1d} \X^{k+1} _\gm \= (H_k|\gm)\; \X^k_\gm \; H_k^{-1}\,.\ee

\rmrk{Induction, choice of $\h_{k+1}$}It may happen that
$C_n\not \in \B(\A)$ for some $n\in \{1,\ldots,  k  \}$. Then
$\cf{\w(C_n)+2}\bigl( \Gm, \v(C_n)\bigr)=0$, and $\ph^n$ is a coboundary
and $ g_n =0$.

By Condition (H) we have $\h_k(C_n)=0$ for $1\leq n \leq K$. We construct
$\h_{k+1}$ from $\h_k$ by taking $\h_{k+1}(C_n)= g_n $ for those $n$ for
which $C_n\in \B(\A)$, and $\h_{k+1}(B)=\h_k(B)$ otherwise. So
$\h_k(B)=\h_{k+1}(B)$ for all $B$ with $d(B)>c_{k+1}$, and Condition (H)
stays valid for $k+1$. If $C_n \not\in \B(\A)$ for all $n$, then
$\h_{k+1}=\h_k$.

\rmrk{Induction, check of Condition {\rm (XPs)} for $k+1$} Modulo terms of
order larger than $c_{k+1}$:
\begin{align}\nonumber
\X^{k+1}_\gm &\;\equiv\; (1-\sum_n a_n|\gm\, C_n) \; (\Ps(\h_k)_\gm + \bar
\Y^k_\gm)\;
(1 + \sum_n a_n\, C_n)\\
\nonumber
&\qquad\qquad \qquad\qquad \qquad \qquad\qquad \qquad
\bigl(\text{\eqref{XK1d}, (XPs)}\bigr)
\displaybreak[0]
\\\nonumber
&\;\equiv\; \Ps(\h_k)_\gm + \bar \Y^k _\gm
- \sum_n a_n|\gm \, C_n + \sum_n a_n\, C_n
\displaybreak[0]
\\\nonumber
&\;\equiv\; \Ps(\h_k)_\gm+ \sum_n \Bigl( - \ps_{ g_n,\gm} + a_n|(\gm-1) -
a_n|\gm + a_n \Bigr)\, C_n\\
\nonumber
&\qquad\qquad \qquad\qquad \qquad \qquad\qquad \qquad
\bigl(\text{\eqref{bYk}, \eqref{phipa}}\bigr)
\\
\label{k+1diff}
&\= \Psi(\h_k )_\gm + \sum_n R_1( g_n ;\gm^{-1}\infty,\infty)\, C_n\,.
\qquad\bigl(\text{\eqref{o1coc}}\bigr)
\end{align}

By \eqref{PsiBdef} and \eqref{BFdef} we have
\begin{align*}
\Ps(&\h_{k+1})_\gm \=1 + \sum_{m\geq 1} \sum_{B_1,\ldots,B_m\in \B(\A)}\\
&\qquad\hbox{} R_m(\h_{k+1}(B_1),
\h_{k+1}(B_2),\ldots,\h_{k+1}(B_m);\gm^{-1}\infty,\infty;t)B_1 B_2 \cdots
B_m \,,
\end{align*}
in which we can leave out the terms in which a $B_i$ occurs with
$d(B_i)>c_{k+1}$, by Condition (H). If we leave out the terms with a $B_i$
for which $d(B_i)>c_k$, we obtain $\Ps(\h_k)_\gm$. If there is a $B_i$ with
$d(B_i)>c_{k}$ this is one of the $C_n$ in \eqref{bYk}, with
$d(B_i)=d(C_n)=c_{k+1}$. Working modulo terms with degree larger than
$c_{k+1}$ we obtain
\begin{align}\nonumber
\Ps(\h_{k+1})_\gm& - \Ps(\h_k)_\gm \;\equiv\; \sum_{ 1\leq n \leq K,\;
C_n\in \B(\A)} R_1(\h_{k+1}(C_n);\gm^{-1}\infty,\infty;t)\, C_n
\\
\label{kk1}&\= \sum_{1\leq n \leq K,\; C_n\in \B(\A)} R_1( g_n
;\gm^{-1}\infty,\infty;t)\, C_n \,.
\end{align}
A comparison of \eqref{k+1diff} and \eqref{kk1} gives Condition
(XPs) for $k+1$.

\rmrk{The induction may halt} It may happen that the induction stops at
stage $k$; namely, if $\X^k=\Psi(\h_k)$. Then we have found an element
$\h=\h_k\in \cf{}(\A;\Gm)$ such that $\Psi(\h)$ is in the class~$c$.

\rmrk{The induction may have infinitely many steps} It may also happen that
we have obtained after infinitely many steps an infinite sequence of
cocycles $\X^k$ in the class $c$, an infinite sequence of $\h_k$, and a
strictly increasing sequence of~$c_k$ satisfying Conditions (H) and~(XPs)
for all~$k$. For each monomial $B\in \B(\A)$ there is at most one $k$ such
that $h_n(B)=0$ for $n\leq k$, and $h_n(B) =h_{k+1}(B)$ for $n\geq k+1$. So
the componentwise limit $\h:=\lim_{k\rightarrow \infty} \h_k$ exists.

The construction of the sequence $(\X^k)_k$ implies that
\[ \X^k_\gm = \bigl((H_{k-1}H_{k-2}\cdots
H_0)|\gm\bigr)\; \X^0_\gm \;
(H_{k-1}H_{k-2}\cdots H_0)^{-1}\,,\]
with $H_k$ as in \eqref{Hk}. The infinite product $H=\cdots H_2\,H_1\,H_0$
converges in $\N(\A)$, since each $H_k$ equals $1$ plus a term in degree
$c_{k+1}$. Similarly, $J(\h;\gm^{-1}\infty,\infty;t) $ is the limit of the
$J(\h_k;\gm^{-1}\infty,\infty;t)$ as $k\rightarrow \infty$, since enlarging
$k$ we change only terms of degrees larger than $c_k$. Condition (XPs) is
valid for all $k$, so the conclusion is that in the limit
\be (H|\gm)\, \X^0 \, H^{-1} \= \Psi(\h)\,.\ee

\rmrk{Uniqueness}Let $\X^0 = \Ps(\h')$ for some $\h'\in \cf{}(\A;\Gm)$. We
claim that in the induction procedure described above applied to this
cocycle $\X^0$ we have at each stage
\begin{align}
\label{c1}
\h_k(B)&\= \h'(B)\quad\text{ for all $B\in \B(\A)$ with } d(B) \leq c_k\,;
\\
\label{c2}
\X^k&\;\equiv \Ps(\h') \quad\text{ modulo terms of degree larger than
$c_k$}\,.
\end{align}
This is true at the start of the induction (use $c_0=0$).

At stage $k$ the non-zero terms with lowest degree in
\[ \Y^k_\gm = J(\h';\gm^{-1}\infty,\infty;t) - J(\h_k
;\gm^{-1}\infty,\infty;t)\]
are due to the $C\in \B(\A)$ with degree equal to $c_{k+1}$. So
\be \bar \Y_\gm \= \sum_{C\in \B(\A),\; d(C)=c_{k+1}} R_1(\h'(B)
;\gm^{-1}\infty,\infty;t)\, C\,. \ee
Let us number the monomials in this sum as $C_1,\ldots, C_K$. Then
$\ph^n_\gm$ in \eqref{phipa} is equal to $-\psi_{\h'(C_n),\gm}$, and
$a_n=0$, $H_k=1$. This implies that $\h_{k+1}(C) = \h'(C)$ for the
monomials $C\in \B(\A)$ with degree $c_{k+1}$, and
$\X^{k+1}=\X^k\equiv \Ps(\h')$ modulo terms with degree larger than
$c_{k+1}$.

 At the end of the induction process we have $\B=\B'$, thus obtaining the
 uniqueness.
\end{proof}

\subsection{Concluding remarks}\itmi Manin \cite{Ma5, Ma6} used formal
series similar to those in \eqref{Jdef} to get a simple description of
relations among iterated integrals. In that approach the noncommutative
cohomology set $H^1\bigl( \Gm;\N(\A)\bigr)$ is a tool. In this paper we
further study the cohomology set $H^1\bigl(\Gm;\N(\A)\bigr)$.

\itm One may apply the approach of this paper to weights in $\ZZ_{\geq 2}$
and trivial multiplier systems. Then the iterated integrals are polynomial
functions. These are in a much smaller $\Gm$-module than the functions with
polynomial growth that we employ. The consequence is that the theorem
analogous to the theorem of Knopp and Mawi (Theorem~\ref{thm-KM}) does not
hold. Cocycles attached to conjugates of holomorphic cusp forms have to be
considered as well (see \cite{Kn74}). However, iterated integrals in which
occur both holomorphic and antiholomorphic cusp forms satisfy more
complicated decomposition relations. We think that Manin's formalism does
not work in that situation.

\itm The same problem occurs if we use the modules in Theorems B and D of
\cite{BCD}, unless we pick the weights $w_j+2$ in such a way that the
elements $\w(C)$ that occur in the sums defining $J(\B;y,x;t)$ are never in
$\ZZ_{\geq0}$.

\itm We work with iterated integrals of the type in \eqref{Rdef1}. Equation
\eqref{Rdef0} defines iterated integrals depending on variables
$t_1,\ldots,t_\ell$ all running independently through the lower half-plane.
It would be nice to have results for the corresponding non-commutative
cocycles. They can be defined, Proposition~\ref{prop-inj} goes through. We
did not manage to adapt the proof of Theorem~\ref{thm-surj} to cocycles of
this type. The problem is to construct formal sequences of the type in
\eqref{BFdef} such that they have the lowest degree terms in a prescribed
summand in the decomposition of the tensor products
$V(v_1,w_1)\otimes \cdots \otimes V(v_\ell,w_\ell)$ into submodules.

\iflitnum
\newcommand\bibit[4]{
\bibitem {#1}#2: {\em #3;\/ } #4}
\else
\newcommand\bibit[4]{
\bibitem[#1] {#1}#2: {\em #3;\/ } #4}
\fi

\end{document}